\newtheorem{thm}{Theorem}[section]
\newtheorem{lemma}[thm]{Lemma}
\newtheorem{cor}[thm]{Corollary}
\newtheorem{prop}[thm]{Proposition}
\theoremstyle{definition}
\newtheorem*{claims}{Claim}
\newcommand{\eps}{\varepsilon}
\newcommand{\ex}{\mathrm{ex}}
\newcommand{\exc}{\mathrm{excess}}
\newcommand{\len}{\lambda}
\newcommand{\maxdev}{\mathrm{maxdev}}
\newcommand{\cA}{\mathcal{A}}
\newcommand{\cB}{\mathcal{B}}
\newcommand{\HH}{\mathcal{H}}
\newcommand{\NN}{\mathbb{N}}
\newcommand{\cP}{\mathcal{P}}
\newcommand{\Mb}{M^*}
\newcommand{\Bin}{\mathrm{Bin}}
\newcommand{\Ex}{\mathbb{E}}
\renewcommand{\le}{\leqslant}
\renewcommand{\ge}{\geqslant}
\newcommand{\ple}{\preccurlyeq}
\newcommand{\pl}{\prec}
\title{Long paths and cycles in random subgraphs of $\HH$-free graphs}
\date{\today}
\author{Michael Krivelevich}
\address{School of Mathematical Sciences, Tel Aviv University, Tel Aviv 69978, Israel}
\email{krivelev@post.tau.ac.il}
\author{Wojciech Samotij}
\address{School of Mathematical Sciences, Tel Aviv University, Tel Aviv 69978, Israel; and Trinity College, Cambridge CB2 1TQ, UK}
\email{samotij@post.tau.ac.il}
\thanks{Research supported in part by:
  (MK) USA-Israel BSF Grant 2010115 and by grant 912/12 from the Israel Science Foundation;
  (WS) ERC Advanced Grant DMMCA and a grant from the Israel Science Foundation.
}
\begin{document}

\maketitle

\begin{abstract}
  Let $\HH$ be a given finite (possibly empty) family of connected graphs, each containing a~cycle, and let $G$ be an arbitrary finite $\HH$-free graph with minimum degree at least $k$. For $p \in [0,1]$, we form a $p$-random subgraph $G_p$ of $G$ by independently keeping each edge of $G$ with probability~$p$. Extending a classical result of Ajtai, Koml{\'o}s, and Szemer{\'e}di, we prove that for every positive~$\eps$, there exists a positive $\delta$ (depending only on $\eps$) such that the following holds: If $p \ge \frac{1+\eps}{k}$, then with probability tending to $1$ as $k \to \infty$, the random graph $G_p$ contains a cycle of length at least $n_\HH(\delta k)$, where $n_\HH(k)>k$ is the minimum number of vertices in an $\HH$-free graph of average degree at least~$k$. Thus in particular $G_p$ as above typically contains a cycle of length at least linear in $k$.
\end{abstract}

\section{Introduction}
\label{sec:introduction}

Given a graph $G$ and a real number $p \in [0,1]$, we define the \emph{$p$-random subgraph of $G$}, denoted by $G_p$, to be the random subgraph of $G$ such that each edge of $G$ belongs to $G_p$ with probability $p$, independently of all other edges. The most studied case of the above model is when $G$ is a complete graph. This particular model, usually denoted by $G(n,p)$, where $n$ is the number of vertices in the base (complete) graph, was first introduced in~\cite{Gi59} and has since become one of the most popular objects of study in combinatorics.

In the groundbreaking paper of Erd{\H{o}}s and R{\'e}nyi~\cite{ErRe60}, the following fundamental discovery was made: If we let $p(n)$ gradually increase from $0$ to $1$, then the connectivity structure of the random graph $G(n,p)$ undergoes a dramatic \emph{phase transition} around $p(n) = \frac{1}{n}$. For any positive constant~$\eps$, if $p(n) \le \frac{1-\eps}{n}$, then asymptotically almost surely\footnote{That is, with probability tending to $1$ as $n \to \infty$.} (a.a.s.), the size of each connected component of $G(n,p)$ is at most logarithmic in $n$, whereas if $p(n) \ge \frac{1+\eps}{n}$, then a.a.s.\ $G(n,p)$ has a unique component of linear size, traditionally called the \emph{giant component}. The paper of Erd{\H{o}}s and R{\'e}nyi has had an enormous influence on the development of the theory of random graphs. Its main results have been given several different proofs and extended or improved in many different ways. For a detailed account of the theory of random graphs, we refer the reader to the two standard monographs~\cite{Bo01, JaLuRu00}.

One of the better-known extensions of the main result of~\cite{ErRe60} is due to Ajtai, Koml{\'o}s, and Szemer{\'e}di~\cite{AjKoSz81}, who proved that if $p(n) \ge \frac{1+\eps}{n}$, then not only the random graph $G(n,p)$ a.a.s.\ contains a giant component occupying a positive proportion of all the vertices, but also it typically has a path of length linear in $n$. An easy corollary of this fact is that if $p(n) \ge \frac{1+\eps}{n}$, then a.a.s.\ $G(n,p)$ contains a cycle of length linear in $n$. A simple proof of this result has recently been given in~\cite{KrSu}.

The following natural generalizations of the classical results about the evolution of the random graph mentioned above were recently considered in~\cite{FrKr, KrLeSu, KrSu, Ri}. Suppose that $p_0 \colon \NN \to [0,1]$ is a \emph{threshold function} for some (monotone) graph property $\cP$ in $G(n,p)$. For example, one might let $p_0(n) = \frac{1}{n}$ and $\cP$ be the property of containing a connected component (or a path / cycle) of size (length) linear in $n$. In particular, suppose that if $p \ge (1+\eps)p_0$ for some positive constant~$\eps$, then a.a.s.\ $G(n,p)$ possesses $\cP$. Does this statement remain true if one replaces $G(n,p)$ with the $p$-random subgraph $G_p$ of an \emph{arbitrary} graph $G$ with minimum degree $n-1$? This has been answered in the affirmative in several cases, e.g., when $\cP$ is the property of being non-planar~\cite{FrKr}, containing a path of length linear in $n$~\cite{KrSu} or of length at least $n-1$~\cite{KrLeSu}, and having a cycle of length
$n-o(n)$~\cite{KrLeSu}. As a by-product, more robust proofs of the corresponding statements in the case $G = K_n$ were obtained.

In this paper, we continue this study and introduce one additional twist. Namely, we fix a finite family $\HH$ of graphs and further assume that the base graph $G$ is \emph{$\HH$-free}, i.e., that $G$ does not contain a copy of any $H \in \HH$ as a subgraph. Since we allow the family $\HH$ to be empty, which imposes no additional restrictions on $G$, our results will generalize some previous works. Our aim is to prove that if $G$ is an $\HH$-free graph with minimum degree at least $k$ (from now on, we will use~$k$ to denote the lower bound on the minimum degree and $n$ to denote the number of vertices of the graph $G$) and $p \ge \frac{1+\eps}{k}$, then with probability approaching $1$ as $k \to \infty$, the random graph $G_p$ contains a long path and a long cycle. Since we are only interested in the asymptotic behavior of these probabilities, we will assume that there exist $\HH$-free graphs with arbitrary large minimum degree. This implies, in particular, that the family $\HH$ cannot contain any acyclic graphs. Moreover, we will assume for convenience that every graph in $\HH$ is connected. We will term such families \emph{good}. That is, we will say that a family $\HH$ of graphs is good if it is a finite collection of connected graphs, each containing a cycle. Finally, we will assume throughout the paper that the base graph~$G$ is finite.

Recall that the \emph{Tur{\'a}n number for $\HH$}, denoted by $\ex(n,\HH)$, is the maximum number of edges in an $\HH$-free graph on $n$ vertices. Observe that if $G$ is an $\HH$-free graph with average degree at least~$k$, then the number $n$ of vertices of $G$ satisfies
\begin{equation}
  \label{eq:n-lower}
  n k \le 2 \cdot \ex(n,\HH).
\end{equation}
Let $n_\HH(k)$ be the smallest $n$ for which~\eqref{eq:n-lower} holds, that is, the smallest number of vertices in an $\HH$-free graph of average degree at least $k$. Our main result is the following.

\begin{thm}
  \label{thm:cycle}
  For every positive $\eps$, there exists a positive constant $\delta$ such that the following is true. Let $k$ be a sufficiently large integer, let $\HH$ be a good family of graphs, and let $G$ be an $\HH$-free graph with minimum degree at least $k$. If $p \ge \frac{1+\eps}{k}$, then
  \[
  \Pr\big(\text{$G_p$ contains a cycle of length at least $n_\HH(\delta k)$}\big) \ge 1 - \exp( - \delta k ).
  \]
\end{thm}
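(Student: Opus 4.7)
The plan is to follow the two-step strategy familiar from proofs of AKS-type results for random subgraphs: first extract a long path in a slightly sparser random subgraph via a depth-first search, and then close it into a cycle using a small independent reserve of random edges. Concretely, use sprinkling to write $1 - p = (1 - p_1)(1 - p_2)$ with $p_1 = (1 + \eps/2)/k$ (still well above the $(1/k)$-threshold) and $p_2 = \Theta(\eps/k)$, so that $G_p$ has the same distribution as the union of two independent copies $G_{p_1} \cup G_{p_2}$.

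The core of the proof is the following claim: with probability at least $1 - \exp(-\delta k)$, the graph $G_{p_1}$ contains a path on at least $L := n_\HH(\delta k)$ vertices. To prove it, I would run a depth-first search on $G_{p_1}$ with the usual three-set partition $V(G) = U_t \cup S_t \cup T_t$ (finished / active stack / untouched), using the classical invariant that every $G$-edge between $U_t$ and $T_t$ has already been queried and rejected from $G_{p_1}$. Suppose for contradiction that $|S_t| < L$ at all times $t$, and pick the stopping time $\tau$ at which $|U_\tau|$ first equals a threshold $N$ chosen slightly below $L$. Since $G[U_\tau]$ is $\HH$-free on $N$ vertices and $N < L = n_\HH(\delta k)$, the defining property of $n_\HH$ gives $2\,\ex(N, \HH) < N \delta k$, so combining with the minimum-degree hypothesis, the bound $|S_\tau| < L$, and double-counting on $\sum_{v \in U_\tau} d_G(v)$,
\[
  |E_G(U_\tau, T_\tau)| \;\ge\; N k - 2\,\ex(N, \HH) - N \cdot L \;\ge\; N\bigl((1-\delta) k - L\bigr).
\]
When $L \le (1 - 2\delta) k$ this is $\Omega(Nk)$, and since the edges of $E_G(U_\tau, T_\tau)$ are rejected independently by $G_{p_1}$, the offending event has probability at most $\exp(-p_1 \cdot \Omega(Nk)) = \exp(-\Omega(N))$. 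Taking $N$ of order $L \asymp \delta k$ and absorbing a union-bound overhead over the possible DFS configurations $(U_\tau, S_\tau)$ --- or, more cleanly, using a stopping-time analysis of the number of DFS queries together with a Chernoff bound --- then yields the required $\exp(-\delta k)$ failure bound.

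The last step is to upgrade the path $P$ of length $\ge L$ in $G_{p_1}$ to a cycle of length $\ge L$ using the independent reserve $G_{p_2}$. For this I would apply the standard P\'osa rotation--extension technique: repeated rotations using $G_{p_1}$-edges along $P$ produce an exponentially large family of alternative path endpoints, and since the expected $G_{p_2}$-degree of each endpoint is $p_2 k = \Omega(\eps)$, a Chernoff estimate gives with probability $1 - e^{-\Omega(k)}$ some rotation whose two endpoints are joined by a $G_{p_2}$-edge, closing a cycle of the desired length. The main obstacle I anticipate is the DFS phase in the regime where $L = n_\HH(\delta k)$ is substantially larger than $k$ (e.g.\ $\HH = \{C_4\}$, where $L \asymp k^2$): in that regime the trivial bound $N \cdot L$ on $e_G(U_\tau, S_\tau)$ swamps the main term $Nk$, and the argument must exploit $\HH$-freeness of $G[U_\tau \cup S_\tau]$ more carefully --- for instance by replacing the trivial bound by $\ex(N+L, \HH)$ and choosing $N$ to balance the exponents, or alternatively by first passing to a sub-structure of $G$ on $\Theta(L)$ vertices of average degree $\Theta(\delta k)$ to which the standard DFS analysis directly applies.
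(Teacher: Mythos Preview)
Your path-finding step is essentially Theorem~\ref{thm:path} of the paper, and the fix you propose at the end (replace the crude bound $N\cdot L$ on $e_G(U_\tau,S_\tau)$ by $\ex(|U_\tau\cup S_\tau|,\HH)$, using that $G[U_\tau\cup S_\tau]$ is $\HH$-free) is exactly what makes that argument go through. So the first two steps can be carried out.

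The genuine gap is Step~3, closing the path into a cycle via sprinkling and P\'osa rotations. This is precisely the obstruction the paper flags in the introduction: unlike in $K_n$, there is no reason for $G$ to contain \emph{any} edge joining two vertices of your path $P$ at distance $\Omega(L)$ along $P$. Concretely, take $\HH=\emptyset$ and let $G$ be a $k$-regular graph of girth $g\gg k$ (such graphs exist with $g$ of order $\log_k n$). Then $L=n_\HH(\delta k)=\Theta(k)$, and no $G$-edge can close any path of length $<g$ into a cycle; sprinkling with $G_{p_2}$ cannot produce edges that are not in $G$. Your rotation claim that ``repeated rotations using $G_{p_1}$-edges along $P$ produce an exponentially large family of endpoints'' also fails here: at $p_1\approx 1/k$ every vertex has only $O(1)$ neighbours in $G_{p_1}$, and in a high-girth $G$ an endpoint of a path of length $<g$ has at most one $G$-neighbour on the path, so rotations do not branch at all. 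Neither extending nor rotating gives you the exponentially many endpoints needed to beat the $\Theta(\eps)$ expected $G_{p_2}$-degree per endpoint.

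The paper therefore takes a completely different route. It first proves, via a two-round exposure and a martingale concentration bound, that $\exc(G_p)=\Omega(n)$ with probability $1-e^{-\Omega(n)}$. It then runs DFS to completion: after the spanning forest $F$ is built, every unqueried $G$-edge is a back-edge joining some vertex $w$ to an ancestor $v$ in $F$, and there are $\Omega(nk)$ such edges still to be queried. Using $\HH$-freeness of $G$ restricted to a carefully chosen subtree of $F$ (of size $\Theta(\ell)$ where $\ell=n_\HH(\Theta(k))$), one shows that $\Omega(k\ell)$ of these back-edges have $\lambda(v,w)\ge\ell$; since each is accepted into $G_p$ independently with probability $p$, one of them appears with probability $1-e^{-\Omega(k)}$, closing a cycle of length $\ge\ell$. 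The point is that the DFS tree itself supplies the long connecting paths, bypassing the girth obstruction entirely.
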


It is plausible that $n_\HH(\delta k)$ above could be replaced by $\delta n_\HH(k)$, but unfortunately our methods do not yield this stronger statement, cf.~Section~\ref{sec:dep-ell-eps}. Note that $n_\HH(k) \ge k+1$ for every family $\HH$ and that $n_\HH(k) \le 2k$ if $\HH$ does not contain any bipartite graph. Consequently, the interesting cases will be either when $\HH$ is empty or when it contains at least one bipartite graph. Indeed, when we let $\HH$ be the empty family, we obtain the following corollary.

\begin{cor}
  \label{cor:cycle}
  For every positive $\eps$, there exists a positive constant $\delta$ such that the following is true. Let $k$ be a sufficiently large integer and let $G$ be an arbitrary graph with minimum degree at least $k$. If $p \ge \frac{1+\eps}{k}$, then
  \[
  \Pr\big(\text{$G_p$ contains a cycle of length at least $\delta k$}\big) \ge 1 - \exp( - \delta k ).
  \]
\end{cor}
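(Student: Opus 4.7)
The corollary is a direct specialization of Theorem~\ref{thm:cycle} to the empty family $\HH = \emptyset$. My plan is therefore to verify the value of $n_\HH(k)$ in this case and then simply invoke the main theorem.

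First, when $\HH = \emptyset$, every graph is $\HH$-free, so $\ex(n, \emptyset) = \binom{n}{2}$. The defining inequality $nk \le 2\cdot \ex(n, \HH)$ becomes $nk \le n(n-1)$, i.e., $n \ge k+1$. Hence $n_\emptyset(k) = k+1$, and in particular $n_\emptyset(t) \ge t$ for every real $t > 0$.

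Given $\eps > 0$, let $\delta'$ be the constant supplied by Theorem~\ref{thm:cycle} for this $\eps$, and set $\delta := \min(\delta', 1)$. If $k$ is sufficiently large, $G$ has minimum degree at least $k$, and $p \ge \frac{1+\eps}{k}$, then Theorem~\ref{thm:cycle} applied with $\HH = \emptyset$ yields a cycle in $G_p$ of length at least $n_\emptyset(\delta' k) \ge \delta' k \ge \delta k$ with probability at least $1 - \exp(-\delta' k) \ge 1 - \exp(-\delta k)$. This is exactly the conclusion of the corollary.

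There is no real obstacle: the entire content lies in Theorem~\ref{thm:cycle}, and the only thing to check is the trivial computation of $n_\emptyset(k)$ above. The only mild subtlety is choosing the corollary's $\delta$ small enough (at most $1$) so that the cycle-length lower bound $n_\emptyset(\delta' k) \ge \delta k$ and the probability lower bound $1 - \exp(-\delta k)$ hold simultaneously, which is handled by the $\min$ above.
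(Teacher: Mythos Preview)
Your proposal is correct and is exactly the argument the paper intends: the corollary is obtained from Theorem~\ref{thm:cycle} by taking $\HH = \emptyset$, using the observation (noted in the paper) that $n_\HH(k) \ge k+1$ for every family~$\HH$. The extra step of replacing $\delta'$ by $\min(\delta',1)$ is harmless but in fact unnecessary, since $n_\emptyset(\delta' k) \ge \delta' k$ already holds without any upper bound on~$\delta'$.
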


We remark that the statement obtained from Corollary~\ref{cor:cycle} by replacing `a cycle of length $\delta k$' with `a path of length $\eps^2k/5$' was proved in~\cite{KrSu}. Still, when $G$ is an arbitrary graph, one cannot easily deduce Corollary~\ref{cor:cycle} from its `path version' using a standard double exposure (sprinkling) argument as, unlike the case when $G$ is a complete graph, there is no guarantee that $G$ contains any edges that close a given path of length $\Theta(k)$ into a cycle of comparable length. For example, the girth of $G$ can be much larger than $k$.

Another case which seems especially interesting is when $\HH$ is the family of all cycles of lengths ranging from $3$ to some $g$. Note that in this case, requiring a graph to be $\HH$-free is the same as requiring that its girth exceeds $g$. Since if $g$ is even, then $\ex(n,C_g) = O(n^{1+2/g})$, as proved by Bondy and Simonovits~\cite{BoSi74}, Theorem~\ref{thm:cycle} has the following nice corollary.

\begin{cor}
  \label{cor:cycle-girth}
  For every $g \in \{2, 3, \ldots\}$ and every positive $\eps$, there exists a positive constant $\delta$ such that the following is true. Let $k$ be a sufficiently large integer and let $G$ be an arbitrary graph with minimum degree at least $k$ and girth larger than $g$. If $p \ge \frac{1+\eps}{k}$, then
  \[
  \Pr\big(\text{$G_p$ contains a cycle of length at least $\delta k^{\lfloor g / 2 \rfloor}$}\big) \ge 1 - \exp( - \delta k ).
  \]
\end{cor}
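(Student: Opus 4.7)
The plan is to deduce Corollary~\ref{cor:cycle-girth} directly from Theorem~\ref{thm:cycle} by a suitable choice of the forbidden family. I would set $\HH = \{C_3, C_4, \ldots, C_g\}$, which is visibly a good family: it is finite, and each of its members is a connected graph containing a cycle. Since having girth larger than $g$ is equivalent to avoiding every cycle of length in $\{3,\ldots,g\}$ as a subgraph, the hypothesis on $G$ is exactly the $\HH$-freeness required by Theorem~\ref{thm:cycle}. Applying that theorem with the given $\eps$, I obtain a constant $\delta_0>0$ (depending only on $\eps$) such that with probability at least $1-\exp(-\delta_0 k)$ the random graph $G_p$ contains a cycle of length at least $n_\HH(\delta_0 k)$. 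The corollary then reduces to a deterministic lower bound of the shape $n_\HH(k) \ge c_g\, k^{\lfloor g/2 \rfloor}$.

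To produce such a bound I would set $m=\lfloor g/2\rfloor$ and split into two cases. When $m=1$, i.e.\ $g\in\{2,3\}$, the trivial inequality $n_\HH(k)\ge k+1$ noted just before Corollary~\ref{cor:cycle} already delivers the required linear growth. When $m\ge 2$ we have $4 \le 2m \le g$, so $C_{2m}\in\HH$, and every $\HH$-free graph is a fortiori $C_{2m}$-free. The Bondy--Simonovits theorem~\cite{BoSi74} then furnishes a constant $C=C(g)$ such that
\[
\ex(n,\HH) \le \ex(n,C_{2m}) \le C\, n^{1+1/m}.
\]
Substituting this into the defining inequality $nk\le 2\ex(n,\HH)$ of $n_\HH(k)$ at $n=n_\HH(k)$ yields $k \le 2C\, n_\HH(k)^{1/m}$, and hence $n_\HH(k) \ge (k/(2C))^m = c_g k^m$.

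Combining the two ingredients, the cycle supplied by Theorem~\ref{thm:cycle} will have length at least $c_g \delta_0^m \cdot k^{\lfloor g/2\rfloor}$, so choosing $\delta = \min\{\delta_0,\,c_g \delta_0^m\}$ will finish the proof. I do not foresee a real obstacle here: the whole argument is a routine reduction from Theorem~\ref{thm:cycle} via the classical Bondy--Simonovits extremal bound. The only points requiring minor care are the separate handling of small $g$ (since $2\lfloor g/2\rfloor$ is not a valid cycle length when $g\le 3$) and the verification that the resulting constants depend only on $g$ and $\eps$, as the statement demands.
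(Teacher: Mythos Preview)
Your proposal is correct and matches the paper's approach: the paper itself does not spell out a proof but simply remarks that Corollary~\ref{cor:cycle-girth} follows from Theorem~\ref{thm:cycle} together with the Bondy--Simonovits bound $\ex(n,C_g)=O(n^{1+2/g})$ for even $g$, which is precisely the deduction you carry out (and your observation that only $C_{2\lfloor g/2\rfloor}$-freeness is used is exactly the content of the sentence following the corollary in the paper). Your case split for small $g$ and the final choice of $\delta$ are fine.
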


Observe that Corollary~\ref{cor:cycle-girth} remains true when one replaces the assumption that the girth of $G$ is larger than $g$ with the weaker assumption that $G$ does not contain a cycle of length $2\lfloor \frac{g}{2} \rfloor$.

The study of circumference (the length of a longest cycle) of graphs with given girth and minimum degree was initiated by Ore~\cite{Or67} and has since attracted the attention of many researchers, see~\cite{ElMe00, SuVe08} and references therein. Several years ago, this study culminated in a result of Sudakov and Verstra{\"e}te~\cite{SuVe08}, who proved that every graph with minimum degree at least~$k$ whose girth exceeds~$g$ contains a cycle of length $\Omega(k^{\lfloor g / 2 \rfloor})$. Actually, it is proved in~\cite{SuVe08} that if $\HH = \{H\}$, where $H$ is a bipartite graph containing a cycle, then every $\HH$-free graph $G$ with average degree $k$ contains a family of cycles whose lengths are $n_\HH(\delta k)$ consecutive even integers, for some positive constant~$\delta$. In particular, every such $G$ contains a cycle of length at least $n_\HH(\delta k)$. As every graph with average degree $k$ contains a subgraph with minimum degree at least $k/2$, the last statement is also a straightforward corollary of our Theorem~\ref{thm:cycle}.

It is perhaps a good point to discuss yet another interpretation of our results, related to robustness of graph properties. The general approach of \emph{robustness}, explicitly promoted in~\cite{KrLeSu}, suggests to investigate whether graph theoretic properties and statements typically remain valid under taking random subgraphs -- which would then indicate that they are robust under (massive) random deletions. For example, it is elementary to prove that any graph $G$ of minimum degree at least $k$, $k\ge 2$, contains a cycle of length at least $k+1$. Corollary~\ref{cor:cycle} shows that this property typically stays with the random subgraph $G_p$ of $G$, even when the edge probability $p$ is only a notch above the critical probability $p^*=\frac{1}{k}$. Moreover, if in addition $G$ is assumed to be $\HH$-free, then not only $G$ contains deterministically a cycle of length at least $n_\HH(\delta k)$, but the $p$-random subgraph $G_p$ of $G$ retains this property with probability exponentially (in $k$) close to 1, even for $p=\frac{1+\eps}{k}$. Thus the property of containing long cycles is robust under taking random subgraphs. This complements in a substantial way the qualitative statement of~\cite[Theorem~1.3]{KrLeSu} (see also~\cite{Ri}), which says that if the minimum degree of $G$ is at least $k$ and $p = \frac{\omega(1)}{k}$, then with probability tending to $1$ as $k \to \infty$, $G_p$ contains a cycle of length $k - o(k)$.

Even though the existence of a cycle of length $\ell$ in a graph immediately implies the existence of a path of length $\ell-1$, we give a separate, much shorter, argument to prove that if $G$ is an $\HH$-free graph with minimum degree at least $k$ and $p \ge \frac{1+\eps}{k}$, then with probability close to $1$, the random graph $G_p$ contains a path of length $n_\HH(\delta k)$ for some positive contant $\delta$, see Theorem~\ref{thm:path} below. Our proof of Theorem~\ref{thm:path} is a fairly straightforward adaptation of the argument given in~\cite{KrSu}.

\begin{thm}
  \label{thm:path}
  Let $\eps \in (0,1)$, let $k$ be an integer, let $\HH$ be a good family of graphs, and let $\ell$ be an integer satisfying
  \begin{equation}
    \label{eq:ell-path}
    \frac{\ex(6\ell/\eps, \HH)}{\ell} \le \frac{k}{2}.
  \end{equation}
  If $G$ is an $\HH$-free graph with minimum degree at least $k$ and $p \ge \frac{1+\eps}{k}$, then
  \[
  \Pr\big(\text{$G_p$ contains a path of length $\ell$}\big) \ge 1 - 3\exp\left(-\frac{\eps^3 k}{300}\right).
  \]
\end{thm}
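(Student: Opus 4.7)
I plan to adapt the Depth-First Search (DFS) argument of Krivelevich--Sudakov~\cite{KrSu}, inserting the $\HH$-freeness of $G$ in the edge-counting step. Run DFS on $G_p$ via the principle of deferred decisions: fix an ordering of $V(G)$ and maintain disjoint sets $A$ (processed), $B$ (the DFS stack), and $C$ (unvisited). At each step, if $B \ne \emptyset$ with top vertex $v$, probe the $G$-neighbors of $v$ in $C$ in order by drawing fresh Bernoulli$(p)$ coins for the corresponding edges of $G$; upon the first success, push that neighbor onto $B$, and if every such probe fails, pop $v$ to $A$. When $B = \emptyset$ and $C \ne \emptyset$, move an arbitrary vertex of $C$ to $B$ to start a new DFS tree. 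Throughout, $B$ forms a path in $G_p$, and every edge of $G$ between $A$ and $C$ has already been queried and has returned $0$.

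Suppose for contradiction that $G_p$ contains no path of length~$\ell$; then $|B| \le \ell$ throughout. Set $N := \lceil 6\ell/\eps\rceil$ and (after checking that $|V(G)| \ge N$, which follows from combining the hypothesis with $e(G) \ge nk/2$) consider the first time $T$ at which $|A(T)| + |B(T)| = N$. Then $|A(T)| \ge N - \ell$, and combining the minimum-degree assumption with the $\HH$-freeness of $G[A(T) \cup B(T)]$ gives
\[
(N - \ell)\,k \;\le\; \sum_{v \in A(T)} \deg_G(v) \;=\; 2\,e_G(A(T)) + e_G(A(T), B(T)) + e_G(A(T), C(T)) \;\le\; 2\ex(N, \HH) + e_G(A(T), C(T)).
\]
Invoking $\ex(N, \HH) \le k\ell/2$ yields $e_G(A(T), C(T)) \ge (N - 2\ell)\,k$. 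Every edge counted on the right was queried and failed by time $T$, so the total number of queries $Q$ performed by that time satisfies $Q \ge Q_0 := \lceil (N - 2\ell)k\rceil$, while the total number of successful queries equals $N$ minus the tiny number of DFS-tree starts.

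Since the query outcomes are i.i.d.\ Bernoulli$(p)$, the bad event implies $\Bin(Q_0, p) \le N + O(1)$. A direct computation from $N = \lceil 6\ell/\eps\rceil$ and $p \ge (1+\eps)/k$ gives the key inequality $p Q_0 \ge (1 + \eps/3)\,N$, and the Chernoff lower-tail bound then yields a failure probability of at most $\exp(-c\,\eps^2 p Q_0)$ for some absolute $c>0$. To reach the claimed exponent $\eps^3 k/300$---which is stronger than $\Omega(\eps\ell)$ only when $\ell$ is much smaller than $\eps^2 k$---I would apply the same argument with the largest $\ell^\ast \ge \ell$ for which the hypothesis still holds, noting that a path of length $\ell^\ast$ automatically contains one of length~$\ell$ and that even in the densest case $\HH = \emptyset$ one has $\ell^\ast \gtrsim \eps^2 k$. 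The main obstacle is the edge-counting step above: in the $G = K_n$ case one uses the trivial identity $e_G(A, C) = |A|\cdot|C|$, but here the $\HH$-free hypothesis is precisely what prevents $e_G(A \cup B)$ from absorbing the entire minimum-degree budget $(N-\ell)\,k$, leaving a residual of order $(N - 2\ell)\,k$ for the failed Bernoulli trials; the choice $N = 6\ell/\eps$ is calibrated so that this residual beats the threshold $p = (1+\eps)/k$ by a factor of $1 + \Omega(\eps)$, as required for Chernoff.
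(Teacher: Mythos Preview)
Your plan is essentially the paper's proof: run DFS on $G_p$, freeze at the first moment $|A\cup B|=N:=6\ell/\eps$, use $\HH$-freeness to bound $2e_G(A)+e_G(A,B)\le 2\,\ex(N,\HH)\le k\ell$ and thereby force at least $(6/\eps-2)k\ell$ queries, apply Chernoff, and then pass to the maximal $\ell^\ast$ (with $\ell^\ast\gtrsim \eps^2 k$) to obtain the $\eps^3 k$ exponent. One small point: your justification that $|V(G)|\ge N$ ``follows from combining the hypothesis with $e(G)\ge nk/2$'' is not quite enough as stated (from $\ex(n,\HH)\le\ex(N,\HH)\le k\ell/2$ you only get $n\le\ell$, which is no contradiction); the paper closes this step via the averaging inequality $\ex(m,\HH)/m\le 2\,\ex(n,\HH)/n$ for $m\le n$, which holds because every $H\in\HH$ is connected.
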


Note the more explicit, as compared to Theorem~\ref{thm:cycle}, dependence of $\ell$ on $\HH$, $k$, and $\eps$. In order to see that Theorem~\ref{thm:path} implies that $G_p$ typically contains a path of length $n_\HH(\delta k)$, note that~\eqref{eq:ell-path} is satisfied when $\ell = n_\HH(\delta k)$ and $\delta$ is sufficiently small (as a function of $\eps$), cf.\ the definition of $n_\HH(k)$ below~\eqref{eq:n-lower} and Lemma~\ref{lemma:Turan}. In particular, observe that when the family $\HH$ is empty, then \eqref{eq:ell-path} is satisfied when $\ell = \eps^2k/36$, and hence Theorem~\ref{thm:path} implies that with probability very close to $1$, $G_p$ contains a path of length $\eps^2k/36$. This is somewhat weaker than \cite[Theorem~4]{KrSu}, which asserts that under the same assumptions, i.e., $\delta(G) \ge k$ and $p \ge \frac{1+\eps}{k}$, with probability tending to $1$ as $k \to \infty$, the random graph $G_p$ contains a path of length $\frac{\eps^2 k}{5}$, but is still optimal up to a constant factor as when $p = \frac{1+\eps}{k}$, then a.a.s.\ the longest path in $G(k+1,p)$ has length at most $2 \eps^2 k$, see, e.g.,~\cite[Theorem~5.17]{JaLuRu00}.

At the heart of our proofs of Theorems~\ref{thm:cycle} and \ref{thm:path} lies the analysis of the execution of the depth-first search algorithm on the random graph $G_p$. This approach to investigating the properties of random graphs near the threshold for the appearance of the giant component was considered in~\cite{KrSu}, and our work draws heavily from there. Having said that, we would like to stress the fact that our proof of Theorem~\ref{thm:cycle} is not a mere adaptation of the arguments from~\cite{KrSu} and employs several novel ideas.

The remainder of the paper is organized as follows. In Section~\ref{sec:prelim}, we introduce some notational conventions and list several auxiliary lemmas that we will refer to in the proofs of our main results. In Section~\ref{sec:DFS}, we describe the depth-first search algorithm and list some of its properties for later reference. Sections~\ref{sec:proof-path} and~\ref{sec:proof-cycle} contain proofs of Theorems~\ref{thm:path} and~\ref{thm:cycle}, respectively. We close with some concluding remarks and open problems, in Section~\ref{sec:remarks}.

\section{Preliminaries}

\label{sec:prelim}

\subsection{Notation}

\label{sec:notation}

We use standard graph theoretic notation. In particular, given a graph $G$, we denote its vertex set by $V(G)$ and the number of edges by $e(G)$. Given a set $A \subseteq V(G)$, we denote the subgraph of $G$ induced by the set $A$ by $G[A]$. For $v \in V(G)$ and $A \subseteq V(G)$, we denote the number of neighbors of $v$ (the degree of $v$ in $G$) and the number of neighbors of $v$ in the set $A$ by $\deg_G(v)$ and $\deg_G(v,A)$, respectively. For two disjoint sets $A, B \subseteq V(G)$, we write $e_G(A,B)$ to denote the number of edges of $G$ with one endpoint in $A$ and one endpoint in $B$.

We shall now introduce the notion of \emph{excess edges}, which will play a crucial role in the proof of Theorem~\ref{thm:cycle}. Suppose that $G$ is an $n$-vertex graph with $r$ connected components of sizes $n_1, \ldots, n_r$, respectively. Since each connected component contains a spanning tree, then clearly $e(G) \ge n_1 - 1 + \ldots + n_r - 1 = n - r$. The number of excess edges of $G$, which we denote by $\exc(G)$, is the difference between $e(G)$ and this trivial lower bound. In other words, we let
\begin{equation}
  \label{eq:excess-comp}
  \exc(G) = e(G) - |V(G)| + \text{\#connected components of $G$}.
\end{equation}
Since adding an edge to a graph is easily seen no to decrease its excess, it follows that if $G$ is a graph and $U$ is a (not necessarily induced) subgraph of $G$, then
\begin{equation}
  \label{eq:excessUleG}
  \exc(U) \le \exc(G).
\end{equation}
We will use this simple observation in the proof of our main result.

Finally, let us remark that we will repeatedly omit rounding symbols whenever they are not crucial and treat large numbers as integers.

\subsection{Tools}
\label{sec:tools}

In our proofs, we will use the following standard estimate on tail probabilities of the binomial distribution, see, e.g., \cite[Appendix~A]{AlSp}.

\begin{lemma}
  \label{lemma:lrg-dev}
  Let $n$ be a positive integer, let $p \in [0,1]$, and let $X \sim \Bin(n,p)$.
  \begin{enumerate}[(i)]
  \item
    \label{item:lrg-dev-1}
    (Chernoff's inequality) For every positive $a$ with $a \le np/2$,
    \[
    P(|X - np| > a) < 2\exp\left(-\frac{a^2}{4np}\right).
    \]
  \item
    \label{item:lrg-dev-2}
    For every positive $\kappa$,
    \[
    P(X > \kappa np) \le \left(\frac{e}{\kappa}\right)^{\kappa np}.
    \]
  \end{enumerate}
\end{lemma}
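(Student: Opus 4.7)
The plan is to derive both bounds from the standard Chernoff/moment-generating-function method applied to $X = X_1 + \ldots + X_n$, where the $X_i$ are independent $\mathrm{Bernoulli}(p)$ variables. For every $t \ge 0$ one has $\Ex[e^{tX}] = (1-p+pe^t)^n \le \exp\bigl(np(e^t-1)\bigr)$, the inequality following from $1 + x \le e^x$ applied with $x = p(e^t-1)$. Combining this with Markov's inequality applied to the nonnegative variable $e^{tX}$ yields the master bound
\[
P(X \ge m) \le \exp\bigl(np(e^t - 1) - tm\bigr), \qquad t \ge 0,
\]
and an analogous statement for the lower tail $P(X \le m)$ with $t \le 0$ (equivalently, apply the upper bound to $n - X \sim \Bin(n, 1-p)$). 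In each part of the lemma it then remains only to select $t$ and to clean up constants.

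For part~\eqref{item:lrg-dev-2}, I would set $m = \kappa np$ and take $t = \ln \kappa$; we may assume $\kappa > 1$, since otherwise the right-hand side is at least $1$ and the statement is vacuous. This choice yields
\[
P(X \ge \kappa np) \le \exp\bigl(np(\kappa - 1) - \kappa np \ln \kappa\bigr) = \left(\frac{e^{\kappa - 1}}{\kappa^\kappa}\right)^{np} \le \left(\frac{e}{\kappa}\right)^{\kappa np},
\]
the final step using $e^{\kappa - 1} \le e^\kappa$.

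For part~\eqref{item:lrg-dev-1}, to estimate the upper tail $P(X \ge np + a)$ I would choose the small value $t = a/(2np)$, which lies in $(0, 1/4]$ thanks to the hypothesis $a \le np/2$. On this range one has the elementary bound $e^t \le 1 + t + t^2$, so the master exponent can be estimated by
\[
np(e^t - 1) - t(np+a) \le np\,t^2 - ta = \frac{a^2}{4np} - \frac{a^2}{2np} = -\frac{a^2}{4np}.
\]
The lower tail $P(X \le np - a)$ is handled identically after substituting $n - X$ for $X$, and summing the two contributions produces the factor $2$ in the statement.

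The only point that needs some care is the tuning of $t$ in part~\eqref{item:lrg-dev-1}: if one picks the naive value $t = a/(np)$, then the quadratic error from the expansion of $e^t$ exactly cancels the linear gain from the $-ta$ contribution, so one is forced to shrink $t$ by a factor of $2$. The hypothesis $a \le np/2$ is calibrated precisely so that this shrunk $t$ still lies in a regime where $e^t \le 1 + t + t^2$ is both valid and quantitatively sharp enough to land on the stated denominator $4np$.
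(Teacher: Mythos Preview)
The paper does not actually prove this lemma: it is stated as a standard tail estimate, with a reference to Appendix~A of Alon--Spencer. Your argument is the standard moment-generating-function derivation one finds there, and both parts are correct.

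One small remark: for the lower tail in part~(i), the substitution $n - X \sim \Bin(n, 1-p)$ that you invoke at the end yields a bound with $n(1-p)$ in the denominator rather than $np$, which for $p < 1/2$ is weaker than what is stated. The clean route is the one you already set up in your master-bound paragraph: take $t = -a/(2np) \in [-1/4, 0)$ directly; the inequality $e^t \le 1 + t + t^2$ is valid on all of $[-1,1]$, and the same arithmetic then gives the exponent $-a^2/(4np)$ for the lower tail as well. With this adjustment the proof is complete.
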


In the proof of Theorem~\ref{thm:cycle}, we will use the following simple estimates on the rate of growth of the Tur{\'a}n function, Lemmas~\ref{lemma:Turan} and \ref{lemma:Turan-avg} below.

\begin{lemma}
  \label{lemma:Turan}
  Let $\HH$ be an arbitrary family of graphs and let $m$ and $n$ be integers with $n \ge m \ge 2$. Then
  \[
  \ex(n,\HH) \le \left(\frac{n-1}{m-1}\right)^2 \cdot \ex(m,\HH).
  \]
\end{lemma}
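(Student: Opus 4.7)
The plan is to use a routine random-sampling (double-counting) argument, the standard way to see that the ``density'' $\ex(n,\HH)/\binom{n}{2}$ is non-increasing in $n$.

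Concretely, fix an $\HH$-free graph $G$ on $n$ vertices with exactly $\ex(n,\HH)$ edges. Choose a subset $S \subseteq V(G)$ of size $m$ uniformly at random. For any fixed edge $uv \in E(G)$,
\[
\Pr(u,v \in S) \;=\; \frac{\binom{n-2}{m-2}}{\binom{n}{m}} \;=\; \frac{m(m-1)}{n(n-1)},
\]
so by linearity of expectation $\Ex[e(G[S])] = \ex(n,\HH) \cdot \frac{m(m-1)}{n(n-1)}$. On the other hand, since $G[S]$ is a subgraph of $G$ it is $\HH$-free, and hence $e(G[S]) \le \ex(m,\HH)$ deterministically; in particular the expectation is at most $\ex(m,\HH)$. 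Rearranging yields
\[
\ex(n,\HH) \;\le\; \frac{n(n-1)}{m(m-1)} \cdot \ex(m,\HH).
\]

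To conclude the lemma as stated, I would note that $\frac{n(n-1)}{m(m-1)} \le \left(\frac{n-1}{m-1}\right)^2$ whenever $n \ge m$, which follows from the equivalent inequality $n/m \le (n-1)/(m-1)$, i.e.\ $m(n-1) \le n(m-1)$ after multiplying out. Chaining the two inequalities gives the desired bound.

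There is no real obstacle here; the only small point worth care is checking the direction of the elementary inequality $\frac{n(n-1)}{m(m-1)} \le \left(\frac{n-1}{m-1}\right)^2$ for $n \ge m \ge 2$, which confirms that the (slightly weaker) form stated in the lemma is indeed implied by the sharper averaging bound.
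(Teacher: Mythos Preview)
Your argument is essentially identical to the paper's: both double-count edges over $m$-subsets to get $\ex(n,\HH) \le \frac{n(n-1)}{m(m-1)}\,\ex(m,\HH)$ and then relax to $\left(\frac{n-1}{m-1}\right)^2$. One small slip: cross-multiplying $n/m \le (n-1)/(m-1)$ gives $n(m-1) \le m(n-1)$, not $m(n-1) \le n(m-1)$ as you wrote (the latter is equivalent to $m \ge n$); with that sign corrected the final inequality chain is valid.
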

\begin{proof}
  Let $G$ be an $\HH$-free graph with $n$ vertices and $\ex(n,\HH)$ edges. The subgraph of $G$ induced by any set of $m$ vertices has at most $\ex(m,\HH)$ edges and therefore,
  \[
  \binom{n}{m} \cdot \ex(m,\HH) \ge \binom{n-2}{m-2} \cdot \ex(n,\HH),
  \]
  which easily implies the claimed inequality.
\end{proof}

\begin{lemma}
  \label{lemma:Turan-avg}
  Let $\HH$ be a good family of graphs and let $m$ and $n$ be integers with $n \ge m \ge 2$. Then
  \[
  \frac{\ex(m,\HH)}{m} \le 2 \cdot \frac{\ex(n,\HH)}{n}.
  \]
\end{lemma}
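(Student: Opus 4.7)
The plan is to use the extremal $\HH$-free graph on $m$ vertices as a building block and paste together several disjoint copies of it to produce an $\HH$-free graph on roughly $n$ vertices. The crucial point is that $\HH$ is a good family, so every $H \in \HH$ is connected; this means that a disjoint union of $\HH$-free graphs is still $\HH$-free, because any copy of $H$ would have to lie entirely inside one of the connected pieces.

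Concretely, let $G$ be an $\HH$-free graph on $m$ vertices with $\ex(m,\HH)$ edges, set $q = \lfloor n/m \rfloor$, and consider the graph $G'$ obtained by taking $q$ vertex-disjoint copies of $G$ together with $n - qm$ isolated vertices. Then $G'$ is an $\HH$-free graph on $n$ vertices, so
\[
\ex(n,\HH) \ge e(G') = q \cdot \ex(m,\HH).
\]

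It now suffices to check that $q \ge n/(2m)$ whenever $n \ge m \ge 2$. If $m \le n < 2m$, then $q = 1 > n/(2m)$. If $n \ge 2m$, then $q \ge n/m - 1 \ge n/(2m)$, the last step being equivalent to $n \ge 2m$. Combining these two cases with the displayed inequality yields
\[
\frac{\ex(n,\HH)}{n} \ge \frac{q}{n} \cdot \ex(m,\HH) \ge \frac{1}{2m} \cdot \ex(m,\HH),
\]
which is exactly the claimed bound.

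There is no real obstacle here beyond the trivial case split on whether $n$ is at least $2m$; the only conceptual input is the use of the goodness assumption to ensure that disjoint unions preserve $\HH$-freeness. The factor of $2$ in the lemma comes precisely from the floor in $q = \lfloor n/m \rfloor$ losing up to a factor of $2$ when $n$ is just slightly larger than $m$.
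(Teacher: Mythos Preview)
Your proof is correct and follows essentially the same approach as the paper: take $\lfloor n/m \rfloor$ disjoint copies of an extremal $\HH$-free graph on $m$ vertices, use connectedness of the members of $\HH$ to conclude the union is $\HH$-free, and then bound $\lfloor n/m \rfloor \ge n/(2m)$. The only difference is that you make the case split for the floor inequality explicit, which the paper leaves implicit.
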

\begin{proof}
  The claimed inequality is an immediate consequence of the simple observation that, since each graph in the family $\HH$ is connected, an $\HH$-free graph on $n$ vertices can be obtained by taking $\lfloor \frac{n}{m} \rfloor$ vertex-disjoint copies of an $\HH$-free graph with $m$ vertices and $\ex(m, \HH)$ edges, thus implying that $\ex(n,\HH) \ge \lfloor \frac{n}{m} \rfloor \cdot \ex(m,\HH) \ge \frac{n}{2m} \cdot \ex(m,\HH)$.
\end{proof}

\section{Depth-first search algorithm}

\label{sec:DFS}

At the heart of our approach lies the \emph{depth-first search algorithm} (DFS algorithm for short), which is a well-known graph exploration method. We briefly describe it below.

The DFS algorithm takes as input a finite graph $G$ on a vertex set $V$ and, after visiting all vertices of $G$, outputs a rooted spanning forest $F$ of $G$ such that the connected components of~$F$ are the connected components of $G$. Since we are interested not only in the connectivity structure of $G$ but also in its cycles, we make our algorithm eventually examine all the edges of $G$, not only those that belong to $F$. Consequently, our version of the DFS algorithm runs in two phases. In the first phase, the algorithm discovers the connected components of $G$ and constructs the spanning forest $F$. In the second phase, it examines the remaining edges of $G$, whose both endpoints lie in the same tree of $F$ (connected component of $G$).

At all times, the algorithm maintains a partition of $V$ into three
sets $S$, $T$, and $U$. At any given moment, the set $S$ contains
vertices whose exploration is complete (i.e., whose neighborhood in
$F$ has been fully determined), $T$ is the set of vertices that have
not yet been visited, and $U$ consists of vertices that are being
explored. The vertices in $U$ are kept in a stack, that is, a
last-in-first-out data structure. The algorithm starts with $T = V$
and $S = U = \emptyset$ and, as it examines the edges of~$G$, the
vertices of $G$ are moved from $T$ to $U$ and from $U$ to $S$. The
algorithm switches from the first to the second phase when $S = V$
and $U = T = \emptyset$. Finally, the procedure terminates when all
the edges of $G$ have been examined.

We assume that the set $V$ is equipped with some canonical linear
order $\ple$. The first phase of the execution of the DFS algorithm
can be divided into $2|V|$ rounds. At the beginning of each round,
the algorithm checks whether or not $U$ is empty. If $U =
\emptyset$, then the $\ple$-smallest vertex of~$T$ is moved to $U$.
Otherwise, the algorithm considers the top (most recently added)
element $v$ of~$U$ and queries whether $\{v, w\}$ is an edge of $G$
for some $w \in T$, examining the vertices of $T$ from the
$\ple$-smallest to the $\ple$-largest. If $v$ does have a neighbor
$w$ in $T$, then the $\ple$-smallest such $w$ is moved from $T$ to
$U$, becoming its top element; no more queries about whether or not
$\{v, w'\}$ is an edge of $G$ for some $w' \in T$ with  $w \pl w'$
are asked in this round. Otherwise, if the top element $v$ of~$U$
has no neighbors in $T$, then $v$ is moved to $S$. Finally, unless
$S = V$, the algorithm proceeds to the next round. Observe that in
each round exactly one vertex moves, either from $T$ to $U$ or from~$U$ to $S$. Since at the end of the first phase all vertices have
made their way from $T$ to $S$, passing through $U$, the number of
rounds is indeed $2|V|$.

At the end of the first phase, the DFS algorithm has constructed a rooted spanning forest $F$ of the input graph $G$. The root of each tree in $F$ is the first vertex of it that was moved from $T$ to $U$. The edges of $F$ are precisely those pairs $\{v,w\} \in G$ such that at some point during the execution of the algorithm, $v$ was the top element of $U$ and $w$ was the $\ple$-smallest neighbor of $v$ in $T$. It is possible that at the end of the first phase, some pairs of vertices have not yet been queried. Note that if $\{v,w\}$ is such a pair, then necessarily $v$ and $w$ belong to the same tree component of $F$ found by the algorithm and, moreover, $v$ is a predecessor of $w$ (or vice-versa) in this rooted tree, as otherwise the algorithm would have queried $\{v,w\}$. In particular, each such $v$ and $w$ are connected by a unique path in this tree. We denote the length of this path by $\len(v,w)$. In order to complete the exploration of all edges of $G$, in the second phase of its execution, the algorithm queries all the previously not queried pairs $\{v,w\}$, ordered according to the value of $\len(v,w)$, from the smallest to the largest. We break ties arbitrarily, that is, the pairs with the same value of $\len(\cdot,\cdot)$ are queried in an arbitrary order.

Finally, we list several properties of the DFS algorithm for future reference:
\begin{enumerate}
\item
  \label{item:DFS-C}
  The algorithm starts exploring a connected component $C$ of $G$ at the moment the $\ple$-smallest vertex of $C$ is moved into the (empty beforehand) set $U$ and completes discovering $C$ when $U$ becomes empty again. At the moment when $C$ is fully discovered, all of its vertices are in $S$.
\item
  \label{item:DFS-U-grows}
  If $T \neq \emptyset$, then every positively answered query increases the size of $U$ by one.
\item
  \label{item:DFS-ST}
  At any stage, the algorithm has queried all pairs $\{v,w\}$ with $v \in S$ and $w \in T$ and found out that $\{v, w\} \not\in G$.
\item
  \label{item:DFS-path}
  The set $U$ always spans a path in $G$.
\end{enumerate}

In several of our proofs, we will analyze the execution of the DFS algorithm on some (random) subgraph of a given graph $G$. Our assumption will be that the base graph $G$ is known to the algorithm and it thus asks queries only about the edges of $G$. We will often use the fact that our graph exploration algorithm implicitly defines a bijection $\varphi$ between the set of all $\{0,1\}$-sequences of length $e(G)$ and the family of all subgraphs of $G$, which pairs subgraphs with $m$ edges with sequences containing exactly $m$ ones. This bijection is defined as follows: Given a graph $G' \subseteq G$, we run the DFS algorithm with input $G'$. We start with $\varphi(G')$ being the empty sequence and each time the algorithm queries whether some $\{v, w\} \in G$ is an edge of $G'$, we append to $\varphi(G')$ the answer to this query, i.e., $1$ if $\{v, w\} \in G'$ and $0$ otherwise. We will sometimes say that $G'$ is represented by the sequence $\varphi(G')$.

Conversely, any $\{0,1\}$-sequence $(X_i)$ of length $e(G)$ represents a subgraph $G'$ of $G$ that is described as follows. We run the DFS algorithm and each time it queries whether some pair $\{v, w\} \in G$ is an edge of $G'$, we let $\{v, w\} \in G'$ if $X_i = 1$ and $\{v, w\} \not\in G'$ otherwise, where $i$ is the number of the query (the algorithm examines every edge of $G$ exactly once). In particular, if $(X_i)$ is a sequence of i.i.d.\ Bernoulli random variables with success probability $p$, then $G'$ is the random subgraph $G_p$ of $G$.

\section{Proof of Theorem~\ref{thm:path}}

\label{sec:proof-path}

Let $G$ be a finite $\HH$-free graph with minimum degree at least $k$ and assume that $p \ge \frac{1+\eps}{k}$. Without loss of generality, we may assume that $\ell$ is the largest integer satisfying~\eqref{eq:ell-path}. This quantity is well defined as the assumptions that $\HH$ is finite and that each graph in $\HH$ contains a cycle imply that $\ex(n,\HH) = \Omega(n^{1+\eta})$ for some positive constant $\eta$, since for every integer $g \ge 3$, there are well-known constructions of $n$-vertex graphs with girth exceeding $g$ and $\Omega(n^{1 + 1/(g-1)})$ edges, see, e.g., \cite{Er59}. In particular,
\[
\frac{36(\ell+1)}{2\eps^2} \ge \frac{\ex\big(6(\ell+1)/\eps,\HH\big)}{\ell+1} \ge \frac{k}{2},
\]
and hence
\begin{equation}
  \label{eq:ell-path-lower}
  \ell > \frac{\eps^2 k}{36}-1.
\end{equation}
Note also that the number $n$ of vertices of $G$ satisfies $nk \le 2 \cdot \ex(n,\HH)$, see~\eqref{eq:n-lower}, which implies that $n > 6\ell / \eps$, as otherwise Lemma~\ref{lemma:Turan-avg} and~\eqref{eq:ell-path} would yield
\[
\frac{\ex(n,\HH)}{n} \le 2 \cdot \frac{\ex(6\ell/\eps, \HH)}{6\ell/\eps} \le \frac{\eps k}{6} < \frac{k}{2},
\]
a~contradiction.

Consider a random $\{0,1\}$-sequence $X$ of length $e(G)$ whose all positions are i.i.d.\ Bernoulli random variables with success probability $p$.  We will show that if we run the DFS algorithm on the random subgraph of $G$ represented by $X$, then with probability at least $1 - 3\exp(-\eps^3 k/300)$, at the moment when $|S \cup U|$ reaches $6\ell/\eps$, the set $U$ contains at least $\ell$ elements. (To see that such moment must occur, recall that in each round of the DFS algorithm, the size of $S \cup U$ does not change or increases by one and that eventually $|S| = n > 6\ell/\eps$.) Since $U$ spans a path in this random subgraph, see property~(\ref{item:DFS-path}) of the DFS algorithm, the assertion of the theorem will follow.

Consider the moment when $|S \cup U|$ reaches $6\ell/\eps$ and suppose to the contrary that $|U| < \ell$. Let $Q$ be the number of queries about edges of $G$ that have been asked so far. By property~(\ref{item:DFS-ST}) of the DFS algorithm,
\[
\begin{split}
  Q & \ge e(S,T) = \sum_{v \in S} \deg(v) - 2e(S) - e(S,U) \ge |S| \cdot k - 2e(S \cup U) \\
  & \ge \left(\frac{6\ell}{\eps} - \ell\right) \cdot k - 2 \cdot \ex(6\ell / \eps,\HH) \ge \frac{6k\ell}{\eps} -2 k \ell.
\end{split}
\]
Let $P$ be the number of positively answered queries among the first $(6/\eps - 2)k\ell$ queries about edges of $G$. Since $P$ is a binomial random variable, it follows from Chernoff's inequality (Lemma~\ref{lemma:lrg-dev}) that with probability at least $1 - 2\exp(-\eps\ell/8)$, which, by~\eqref{eq:ell-path-lower}, is at least $1 - 3\exp(-\eps^3 k/300)$,
\[
P > \left(1+\frac{\eps}{2}\right)\left(\frac{6}{\eps} - 2\right)\ell > \frac{6\ell}{\eps},
\]
where in the last inequality we used the assumption that $\eps < 1$. Finally, since $T$ is still non-empty (as $n > 6\ell/\eps = |S \cup U|$), it follows from property~(\ref{item:DFS-U-grows}) of the DFS algorithm that $|S \cup U| \ge P$, a~contradiction. \qed

\section{Proof of Theorem~\ref{thm:cycle}}

\label{sec:proof-cycle}

\subsection{Proof outline}

Let us start by giving a brief outline of the proof. Suppose that $G$ is an $\HH$-free graph with $n$ vertices and minimum degree at least $k$, for some sufficiently large integer $k$, and that $p \ge \frac{1+\eps}{k}$. The key step in the proof is to show that with high probability, the random graph $G_p$ contains $\Omega(n)$ excess edges. To this end, we first prove that with probability $\Omega(1)$, a positive proportion of the vertices of $G_p$ lies in large connected components (Theorem~\ref{thm:pr-lrg-comp}). Second, we observe that it is extremely unlikely that there is a set $B$ of $\Omega(n)$ vertices that belong only to large connected components in $G_p$ but $G[B]$ has merely $o(|B| \cdot k)$ edges. It then follows from a standard double exposure argument that with positive probability, $G_p$ has $\Omega(n)$ excess edges (Theorem~\ref{thm:excess-edges}). Finally, we note that the number of excess edges in a random graph is tightly concentrated around its expectation (Proposition~\ref{prop:excess-conc}) and therefore, $\exc(G_p) = \Omega(n)$ with very high probability (Corollary~\ref{cor:excess}). This means that when we run the DFS algorithm on a typical $G_p$, then the number of queries about edges of $G$ asked in the second phase of its execution is $\Omega(nk)$. Since the graph $G$ is $\HH$-free, which implies upper bounds on the densities of induced subgraphs of $G$, at least $\Omega(k^2)$ of these queries are about edges $\{u,v\} \in G$ such that $\lambda(u,v) = \Omega\big(n_\HH(k)\big)$. With high probability, one of these pairs is an edge of $G_p$; this edge closes a cycle of length $\Omega\big(n_\HH(k)\big)$ in $G_p$.

\subsection{Bounding the number of excess edges}

As mentioned above, the key ingredient in our proof of Theorem~\ref{thm:cycle} is the fact that if $G$ is a graph with minimum degree at least $k$ and $p \ge \frac{1+\eps}{k}$, then with high probability, the random graph $G_p$ contains $\Omega(|V(G)|)$ excess edges, see Corollary~\ref{cor:excess} below. This statement will be an immediate consequence of the following two facts. First, in Proposition~\ref{prop:excess-conc}, using a martingale concentration result, we prove that for arbitrary graph $G$ and probability $p$, the number of excess edges in the $p$-random subgraph of $G$ is concentrated around its expectation. Second, in Theorem~\ref{thm:excess-edges}, we show that if $\delta(G) \ge k$ and $p \ge \frac{1+\eps}{k}$, then this expectation is at least $c \cdot |V(G)|$ for some positive constant $c$.

\begin{prop}
  \label{prop:excess-conc}
  Let $G$ be an arbitrary graph and let $p \in [0,1/2]$. Let $\mu$ be the expected number of excess edges in the random graph $G_p$. Then for every $\beta \in (0,1]$,
  \[
  \Pr\big(|\exc(G_p) - \mu| \ge \beta p e(G)\big) \le 2\exp\left( - \frac{\beta^2pe(G)}{3}\right).
  \]
\end{prop}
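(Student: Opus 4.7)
The natural approach is a martingale concentration argument for the edge-exposure martingale of $\exc(G_p)$. Enumerate the edges of $G$ as $e_1, \ldots, e_m$ with $m = e(G)$, and let $X_i$ denote the indicator that $e_i \in G_p$, so that $X_1, \ldots, X_m$ are i.i.d.\ $\Bin(1,p)$ random variables. Consider the Doob martingale $M_i = \Ex\big[\exc(G_p) \mid X_1, \ldots, X_i\big]$, for which $M_0 = \mu$ and $M_m = \exc(G_p)$. A naive Azuma--Hoeffding bound with Lipschitz constant $1$ would only produce an exponent of order $\beta^2 p^2 e(G)$ in the tail bound, so some Bernstein-type refinement that also exploits the small conditional variance of each martingale difference is needed.

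The key structural observation is that $\exc$ is $1$-Lipschitz under single-edge modifications: if $G''$ is obtained from $G'$ by adding one edge, then $e(\cdot)$ increases by $1$ while the number of connected components decreases by $0$ or $1$, so $\exc(G'') - \exc(G') \in \{0, 1\}$. Coupling the conditional distributions of $G_p$ given $\{X_i = 0\}$ and $\{X_i = 1\}$ with all other edges sampled from the same source of randomness, one can then write the martingale difference as $M_i - M_{i-1} = (X_i - p) a_i$, where $a_i$ is $\sigma(X_1, \ldots, X_{i-1})$-measurable and satisfies $|a_i| \le 1$. In particular $|M_i - M_{i-1}| \le 1$ almost surely, while the conditional variance satisfies $\Ex\big[(M_i - M_{i-1})^2 \mid X_1, \ldots, X_{i-1}\big] = a_i^2 \, p(1-p) \le p$. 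Summing over $i$, the total conditional variance of the martingale is at most $p\, e(G)$.

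Feeding these two bounds into Freedman's martingale inequality (a Bernstein-type refinement of Azuma's inequality) with variance budget $\sigma^2 = p\, e(G)$ and step bound $R = 1$ yields
\[
\Pr\big(|\exc(G_p) - \mu| \ge \beta p e(G)\big) \le 2\exp\left(-\frac{(\beta p e(G))^2}{2\bigl(p e(G) + \beta p e(G)/3\bigr)}\right),
\]
and using $\beta \le 1$ to bound $1 + \beta/3 \le 4/3$ in the denominator gives an exponent of at least $3\beta^2 p e(G)/8 \ge \beta^2 p e(G)/3$, as required. I do not anticipate a genuine obstacle here; the only subtlety is remembering to track the variance of each martingale step rather than only its worst-case magnitude, since otherwise the exponent would scale like $p^2 e(G)$ instead of $p e(G)$, which is far too weak to be useful when $p$ is of order $1/k$.
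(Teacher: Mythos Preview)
Your proposal is correct and follows essentially the same approach as the paper: set up the edge-exposure Doob martingale, use the $1$-Lipschitz property of $\exc(\cdot)$ under single-edge changes, and feed the resulting bounds on step size and conditional variance into a Bernstein-type martingale inequality. The paper invokes McDiarmid's Theorem~3.9 rather than Freedman's inequality, which leads to a marginally different intermediate denominator ($2pe(G) + \beta p e(G)$ versus your $2pe(G) + \tfrac{2}{3}\beta p e(G)$), but both simplify to the same final exponent $\beta^2 p e(G)/3$ using $\beta \le 1$.
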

\begin{proof}
  Let $m = e(G)$ and fix an arbitrary ordering $e_1, \ldots, e_m$ of the edges of $G$. For each $i$, let $X_i$ be the indicator random variable of the event $e_i \in G_p$. Fix an $i$, let $A$ be an arbitrary subset of $\{e_1, \ldots, e_{i-1}\}$, and let $\cA_{i,A}$ denote the event that $G_p \cap \{e_1, \ldots, e_{i-1}\} = A$. Following McDiarmid~\cite{Mc}, we define $g_{i,A} \colon \{0,1\} \to \NN$ by
  \[
  g_{i,A}(x) = \Ex\big[\exc(G_p) \mid \cA_{i,A} \text{ and } X_i = x\big] - \Ex\big[\exc(G_p) \mid \cA_{i,A}\big].
  \]
  The function $g_{i,A}$ measures how much the expected number of excess edges in $G_p$ changes when it is revealed whether $e_i$ is or is not an edge of $G_p$. Observe crucially that the function $\exc(\cdot)$ is edge Lipschitz, i.e., adding or deleting a single edge to/from a graph changes the number of excess edges by at most one. It follows that $|g_{i,A}(1) - g_{i,A}(0)| \le 1$ for all $i$ and $A$. Applying~\cite[Theorem~3.9]{Mc} to the sequence $(X_i)$ with $f = \exc$ and $t = \beta p e(G)$, noting that $b = \maxdev \le 1$ and $\hat{r}^2 \le e(G)$, we get
  \[
  \Pr\big(|\exc(G_p) - \mu| \ge \beta p e(G)\big) \le 2 \exp\left( - \frac{(\beta p e(G))^2}{2pe(G) + \beta p e(G)} \right) \le 2 \exp\left( - \frac{\beta^2pe(G)}{3}\right).\qedhere
  \]
\end{proof}

Our proof of Theorem~\ref{thm:excess-edges} will use the following fact, which is implicit in many earlier works on the phase transition in $G(n,p)$, see, e.g., \cite{Bo01, JaLuRu00}. Our proof here, which we include for the sake of completeness, follows the approach of~\cite{KrSu}.

\begin{thm}
  \label{thm:pr-lrg-comp}
  Let $\eps \in (0, 1/3)$, let $k$ be an arbitrary integer, let $G$ be a graph with minimum degree at least $k$, and let $v$ be an arbitrary vertex of $G$. If $p \ge \frac{1+\eps}{k}$, then
  \[
  \Pr\big(\text{the connected component of $v$ in $G_p$ has at least $\eps k / 2$ vertices} \big) > \eps/6.
  \]
\end{thm}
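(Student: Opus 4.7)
The plan is to run the DFS algorithm on $G$ driven by a random $\{0,1\}$-sequence $X = (X_i)$ of iid $\Bin(1,p)$ entries, so that via the bijection $\varphi$ of Section~\ref{sec:DFS} the graph explored is distributed as $G_p$. I choose the ordering $\ple$ so that $v$ is its smallest element, which makes the connected component $C$ of $v$ in $G_p$ the first one discovered. Let $m := |C|$, let $\tau$ denote the number of queries asked until $U$ becomes empty for the first time after $v$ has been placed in it, and set $M := \lceil \eps k / 2 \rceil$. The goal is $\Pr[m \ge M] > \eps / 6$. By DFS property~(\ref{item:DFS-ST}) every edge of $G$ across the cut $(C, V(G) \setminus C)$ has been queried by time $\tau$, and exactly $m - 1$ of the first $\tau$ queries are positive, so
\[
\tau \ge (m - 1) + e_G(C, V(G) \setminus C) \ge m(k - m + 2) - 1.
\]
On $\{m < M\}$ the same reasoning yields, for each $u \in C$, that at the moment $u$ is transferred from $U$ to $S$ one has $|S \cup U| \le m < M$, whence $u$ queries at least $\deg_G(u) - m \ge k - M$ of its $G$-neighbors during the DFS. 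Call this count $Q_u$, so that $Q_u \ge k - M$ uniformly on $\{m < M\}$.

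The key step is a coupling with a Galton--Watson branching process. For each vertex $u \in V(G)$, let $B_u = (B_{u, j})_{j \ge 1}$ consist of the entries of $X$ that answer DFS queries asked with $u$ at the top of $U$; since each $X_i$ contributes to exactly one $B_u$ and the identity of the querying vertex is determined by $X_1, \ldots, X_{i-1}$, the family $(B_u)_{u \in V(G)}$ consists of mutually independent iid $\Bin(1,p)$ sequences. Consider the Galton--Watson tree $\mathcal{T}$ rooted at $v$ whose offspring count at each vertex $u$ equals the $\Bin(k - M, p)$ random variable $\sum_{j=1}^{k - M} B_{u, j}$. On $\{m < M\}$, the number of DFS children of any $u \in C$, namely $\sum_{j = 1}^{Q_u} B_{u, j}$, dominates the GW-offspring count because $Q_u \ge k - M$; iterating, $\mathcal{T}$ embeds into the DFS tree of $C$, so $|\mathcal{T}| \le m$ on $\{m < M\}$, and equivalently $\{|\mathcal{T}| \ge M\} \subseteq \{m \ge M\}$.

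The mean offspring of $\mathcal{T}$ equals $\mu := (k - M)(1 + \eps)/k = (1 - \eps/2)(1 + \eps) > 1 + \eps/3$ for $\eps \in (0, 1/3)$, so $\mathcal{T}$ is supercritical, and its survival probability $\pi$ is the unique positive solution of $1 - \pi = (1 - p \pi)^{k - M}$. Plugging $\pi = \eps/6$ and using $(1 - p\pi)^{k-M} \le \exp(-(k-M)p\pi) = \exp(-\mu \pi)$ together with the elementary inequality $\mu \pi > -\ln(1 - \pi)$ at $\pi = \eps/6$ (which follows from $\mu > 1 + \eps/3$ and a Taylor expansion), one obtains $(1 - p\pi)^{k - M} < 1 - \pi$ at $\pi = \eps/6$; the monotonicity of the fixed-point equation then forces the true survival probability to strictly exceed $\eps/6$. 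Since surviving implies $|\mathcal{T}| \ge M$, we conclude
\[
\Pr[m \ge M] \ge \Pr[|\mathcal{T}| \ge M] \ge \pi > \eps / 6.
\]

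The main obstacle is to justify the coupling rigorously. Two facts must be verified: (i) the per-vertex sub-sequences $(B_u)_{u \in V(G)}$ extracted from $X$ are genuinely iid $\Bin(1,p)$ and independent across $u$, which follows from the DFS being a deterministic function of $X$ that uses each $X_i$ exactly once with a querying vertex measurable with respect to the past; and (ii) the uniform lower bound $Q_u \ge k - M$ for all $u \in C$ on the event $\{m < M\}$, which rests on the inclusion $S \cup U \subseteq C$ throughout $v$'s exploration and on the DFS's transfer rule from $U$ to $S$ requiring all remaining $T$-neighbors of $u$ to be queried first.
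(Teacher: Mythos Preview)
Your branching-process domination is a genuinely different route from the paper's. The paper never constructs a Galton--Watson tree; instead it defines the biased random walk
\[
Z_i = (1-\eps/2)k + \sum_{j\le i} Y_j, \qquad Y_j = \begin{cases} (1-\eps/2)k & X_j=1,\\ -1 & X_j=0,\end{cases}
\]
shows that $\{Z_i>0\ \forall i\}$ forces $U$ not to empty before $|S|\ge \eps k/2$, and bounds $\Pr[Z_i>0\ \forall i]$ via the exponential martingale $M_i=\alpha^{Z_i}$ and optional stopping. That argument sidesteps all coupling issues at the cost of a slightly opaque choice of $\alpha$; your approach is more conceptual but demands a carefully specified coupling.

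That coupling is where your write-up has a real gap. You describe $\mathcal{T}$ as a Galton--Watson tree whose vertices are elements of $V(G)$, with offspring count $\sum_{j\le k-M} B_{u,j}$ at each $u$. But a supercritical Galton--Watson tree is infinite with positive probability while $V(G)$ is finite, so as written $\mathcal{T}$ is not defined on the whole probability space; you cannot simultaneously have $\{|\mathcal{T}|\ge M\}\subseteq\{m\ge M\}$ and $\Pr[|\mathcal{T}|\ge M]\ge \pi$ for the true survival probability $\pi$. Concretely: you never say which vertex of $G$ the $j$th GW-child of $u$ is, and if $j>Q_u$ there is no such vertex. The standard repair is to augment the construction with fresh i.i.d.\ $\Bin(1,p)$ sequences for ``virtual'' nodes, used whenever the GW exploration would call for a query that the DFS does not actually make; then $\mathcal{T}$ is a genuine GW($\Bin(k-M,p)$) tree, and on $\{m<M\}$ your bound $Q_u\ge k-M$ guarantees that no virtual nodes are ever created, so $\mathcal{T}$ really embeds in the DFS tree of $C$. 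Your point~(i) also needs this padding to make the $(B_u)$ into honest infinite i.i.d.\ sequences. Finally, the identity $\mu=(1-\eps/2)(1+\eps)$ assumes $M=\eps k/2$ exactly; with $M=\lceil \eps k/2\rceil$ you lose an additive $(1+\eps)/k$, which is harmless once $k$ is moderately large but should be acknowledged (for very small $k$ the statement is trivial anyway).
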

\begin{proof}
  Let $G$ be a graph with minimum degree at least $k$, let $v$ be an arbitrary vertex of $G$, and fix some arbitrary linear order $\ple$ on $V$ whose smallest element is $v$. Assume that $p \ge \frac{1+\eps}{k}$ and consider the $p$-random $\{0,1\}$-sequence $X$ of length $e(G)$. We will show that if we run the DFS algorithm on the random subgraph of $G$ represented by $X$, then with probability at least $\eps/6$, the following event holds:

  \noindent
  \begin{center}
    \begin{tabular}{rp{0.85\textwidth}}
      $\cA$: & From the moment $v$ is moved to $U$, the set $U$ does not become empty until $|S| \ge \eps k/2$.
    \end{tabular}
  \end{center}
  This clearly implies the assertion of the theorem, see property~(\ref{item:DFS-C}) of the DFS algorithm.

  With the aim of estimating the probability of $\cA$, let us define for each $i \in \{1, \ldots, e(G)\}$,
  \[
  Y_i =
  \begin{cases}
    (1-\eps/2) \cdot k & \text{if $X_i = 1$},\\
    -1 & \text{if $X_i = 0$},
  \end{cases}
  \qquad
  \text{and}
  \qquad
  Z_i = (1-\eps/2) \cdot k + \sum_{j = 1}^i Y_j.
  \]
  We claim that if $Z_i > 0$ for all $i \le e(G)$, then $\cA$ holds. Indeed, suppose that the sequence $X$ is such that $Z_i > 0$ for all $i$ and consider the execution of the DFS algorithm on the random subgraph of $G$ defined by $X$. Suppose that $\cA$ does not hold, that is, the set $U$ becomes empty when $|S| < \eps k / 2$. Recall that initially $U$ contains one element (the vertex $v$) and the size of $U$ increases by one every time $X_i = 1$ for some $i$. Moreover, at the time a vertex $w$ is moved from $U$ to $S$, the DFS algorithm has already queried all the $\deg(w,T)$ edges of $G$ that connect $w$ and $T$ and got a negative answer for each of these queries. Finally, note that clearly $\deg_G(w,T) \ge k - |S| - |U| \ge (1-\eps/2) \cdot k$. In particular, the algorithm gets at least $(1-\eps/2)k$ negative answers to queries about edges of $G$ incident to $w$ before moving $w$ from $U$ to $S$. These three facts readily imply that if $\cA$ does not hold, then $Z_i \le 0$ for some $i$.

  Finally, we estimate the probability that $Z_i > 0$ for all $i$. To this end, let $\alpha$ be the smallest positive real that satisfies
  \begin{equation}
    \label{eq:alpha}
    f(\alpha) := p \cdot \alpha^{(1-\eps/2) \cdot k} + (1-p) \cdot \alpha^{-1} = 1.
  \end{equation}
  We now argue that $\alpha \le e^{-\frac{\eps}{4k}}$. To this end, observe that $\lim_{\alpha \to 0^+} f(\alpha) = \infty$ and, recalling that $\eps < 1/3$ and that $e^x \le 1+x+x^2$ if $|x| \le 1$,
  \[
  \begin{split}
    f\left(e^{-\frac{\eps}{4k}}\right) & = p \cdot e^{-\frac{\eps}{4} \cdot \left(1-\frac{\eps}{2}\right)} + (1-p) \cdot e^{\frac{\eps}{4k}} \\
    & \le p \cdot \left( 1 - \frac{\eps}{4} + \frac{3\eps^2}{16} \right) + (1-p) \cdot \left( 1 + \frac{\eps}{4k} + \frac{\eps^2}{16k^2} \right) \\
    & = 1 - \frac{\eps}{4} \cdot \left(p - \frac{1}{k} - \frac{3\eps p}{4}\right) - \frac{\eps}{4 k} \left(p - \frac{\eps (1-p)}{4k}\right) < 1.
  \end{split}
  \]
  Now, let $Z_0 = (1-\eps/2)k$ and for each $i$ with $0 \le i \le e(G)$, let $M_i = \alpha^{Z_i}$. Since $\alpha$ satisfies~\eqref{eq:alpha}, the sequence $(M_i)$ is a martingale. Furthermore, let
  \[
  T = \min\{i \colon Z_i \le 0\} \qquad \text{and} \qquad \Mb_i = M_{\min\{i,T\}}
  \]
  and observe that the sequence $(\Mb_i)$ is also a martingale, as $T$ is a stopping time with respect to~$(M_i)$. Therefore,
  \[
  \Ex[\Mb_{e(G)}] = \Ex[\Mb_0] = \Ex[M_0] = \Ex[\alpha^{Z_0}] = \alpha^{\left(1-\frac{\eps}{2}\right)k} \le e^{-\frac{\eps}{4}\left(1-\frac{\eps}{2}\right)}.
  \]
  On the other hand, since $\alpha < 1$ and therefore $\Mb_{e(G)} \ge 1$ whenever $Z_i \le 0$ for some $i$, then
  \[
  \Ex[\Mb_{e(G)}] \ge \Pr(\text{$Z_i \le 0$ for some $i$}).
  \]
  It follows that, recalling again that $e^x \le 1 + x + x^2$ if $|x| \le 1$,
  \[
  \Pr(\text{$Z_i > 0$ for all $i$}) \ge 1 - e^{-\frac{\eps}{4}\left(1-\frac{\eps}{2}\right)} \ge \frac{\eps}{4}\left(1-\frac{\eps}{2}\right) - \frac{\eps^2}{16} > \frac{\eps}{6}.\qedhere
  \]
\end{proof}

\begin{thm}
  \label{thm:excess-edges}
  For every positive $\eps$, there exists a positive constant $c$ such that the following holds. Let $k$ be a sufficiently large integer and let $G$ be an $n$-vertex graph with minimum degree at least $k$. If $p \ge \frac{1+\eps}{k}$, then the expected number of excess edges in $G_p$ is at least $c n$.
\end{thm}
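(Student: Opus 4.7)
The plan is a double-exposure (sprinkling) argument combined with the FKG correlation inequality. Assume without loss of generality that $\eps \le 1/2$ (otherwise replace $\eps$ by $1/2$, since the hypothesis on $p$ only gets stronger). Write $p = p_1 + p_2 - p_1 p_2$ with $p_1 := (1+\eps/2)/k$, so that $p_2 \ge \eps/(2k)$, and view $G_p = G_{p_1} \cup G_{p_2}$ with $G_{p_1}$ and $G_{p_2}$ independent $p_i$-random subgraphs of $G$. Set
\[
B := \bigl\{ v \in V(G) : v \text{ lies in a } G_{p_1}\text{-component of size at least } \eps k/4 \bigr\}.
\]
Applying Theorem~\ref{thm:pr-lrg-comp} with $\eps/2$ in place of $\eps$ gives $\Pr(v \in B) > \eps/12$ for every $v$, and hence $\Ex[|B|] > \eps n/12$.

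The crucial input is an unconditional lower bound on $\Ex[e(G[B])]$. Each event $\{v \in B\}$ is monotone increasing in the edge indicators of $G_{p_1}$, since adding an edge to $G_{p_1}$ can only enlarge its connected components. The FKG inequality therefore yields $\Pr(u, v \in B) \ge \Pr(u \in B)\Pr(v \in B) > (\eps/12)^2$ for all $u, v \in V(G)$, and summing over $\{u,v\} \in E(G)$ while using $e(G) \ge nk/2$ gives
\[
\Ex\bigl[e(G[B])\bigr] \ge e(G) \cdot \Bigl(\frac{\eps}{12}\Bigr)^2 \ge \frac{\eps^2 n k}{288}.
\]
Separately, since every connected component of $G_{p_1}[B]$ has at least $\eps k/4$ vertices, $G_{p_1}[B]$ has at most $4|B|/(\eps k)$ components, so a spanning forest bound gives $e(G_{p_1}[B]) \ge |B|\bigl(1 - 4/(\eps k)\bigr)$.

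Conditionally on $G_{p_1}$, every edge of $G[B]\setminus G_{p_1}$ is present in $G_p$ independently with probability $p_2$, so
\[
\Ex\bigl[e(G_p[B])\bigr] = (1-p_2)\,\Ex\bigl[e(G_{p_1}[B])\bigr] + p_2\, \Ex\bigl[e(G[B])\bigr] \ge \bigl(1 - O(1/k)\bigr)\Ex[|B|] + \frac{p_2 \eps^2 n k}{288}.
\]
For $k$ large enough depending on $\eps$, the additive term $p_2 \cdot \eps^2 n k/288 = \Omega(\eps^3 n)$ dominates the $O(n/k)$ slack, yielding $\Ex[e(G_p[B])] - \Ex[|B|] \ge c n$ for some $c = \Omega(\eps^3)$. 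Finally, by~\eqref{eq:excessUleG} together with the elementary bound $\exc(H) \ge e(H) - |V(H)|$ that holds for any graph $H$, we have $\exc(G_p) \ge \exc(G_p[B]) \ge e(G_p[B]) - |B|$, and taking expectations gives $\Ex[\exc(G_p)] \ge c n$ as required.

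The principal obstacle is that $B$ is random and determined by $G_{p_1}$, so one cannot simply plug a fixed set into the vertex-wise component estimate of Theorem~\ref{thm:pr-lrg-comp}. The FKG step is the key move: it converts the pointwise component-size bound into the unconditional lower bound on $\Ex[e(G[B])]$ needed to close the sprinkling decomposition, and this is the only genuinely new ingredient beyond the outline sketched at the start of Section~\ref{sec:proof-cycle}.
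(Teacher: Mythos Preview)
Your argument is correct and takes a genuinely different route from the paper's proof. Both proofs use the same double-exposure setup and the same input, Theorem~\ref{thm:pr-lrg-comp}, to ensure that each vertex lands in a large $G_{p_1}$-component with probability $\Omega(\eps)$. The divergence is in how one shows that the random set $B$ of ``well-connected'' vertices induces many edges of $G$. The paper introduces an auxiliary bad event $\cB$ (that some large set $B'$ with $e_G(B') \le \delta |B'|k$ nonetheless supports $|B'|/2$ edges of $G_{p_1}$), rules it out by a union bound together with a binomial tail estimate, and then conditions on $\cA \wedge \neg\cB$ to force $e(G[A]) > \delta |A| k$. You replace this entire detour by a single application of the FKG (Harris) inequality to the increasing events $\{u \in B\}$ and $\{v \in B\}$, obtaining $\Ex[e(G[B])] \ge (\eps/12)^2 e(G)$ directly. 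This is cleaner, avoids any conditioning, and in fact yields a better constant: your $c$ is of order $\eps^3$, whereas tracing the paper's parameters gives $c$ of order $\eps^5$.

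One small point of presentation: when you write $(1-p_2)\,\Ex[e(G_{p_1}[B])] \ge (1 - O(1/k))\Ex[|B|]$, you are implicitly using $p_2 = O(1/k)$, which is not part of the hypothesis (only $p \ge (1+\eps)/k$ is assumed, so $p_2$ could be as large as $1$). The conclusion is unaffected: either reduce without loss of generality to $p \le 2(1+\eps)/k$, since for larger $p$ one has $\Ex[\exc(G_p)] \ge p e(G) - n \ge n$ trivially, or simply observe that the potential loss $p_2\,\Ex[|B|] \le p_2 n$ is dominated by the gain $p_2 \cdot \eps^2 nk/288$ once $k \ge 576/\eps^2$. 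With either fix your computation goes through as written.
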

\begin{proof}
  Let us first choose some parameters. Let
  \[
  \gamma = \frac{\eps}{24} \qquad \text{and} \qquad \delta = \frac{\gamma^2}{4e^4}
  \]
  and assume that $k \ge \frac{32}{\delta \eps^2}$. Without loss of generality, we may also assume that $\eps \le \frac{2}{3}$. Let $G$ be a~graph with minimum degree at least $k$ and suppose that $p \ge \frac{1+\eps}{k}$. We will expose the edges of $G_p$ in two rounds. To this end, let $p_1 = \frac{1 + \eps/2}{k}$, let $p_2$ satisfy $(1-p_1)(1-p_2) = 1-p$, and observe that $p_2 \ge p - p_1 \ge \frac{\eps}{2k}$. Consider the following two events in $G_{p_1}$:

  \noindent
  \begin{center}
    \begin{tabular}{rp{0.85\textwidth}}
      \smallskip
      $\cA$: & at least $\gamma n$ vertices of $G_{p_1}$ lie in components of size at least $\eps k /4$. \\
      $\cB$: & some $B \subseteq V(G)$ with $|B| \ge \gamma n$ and $e(G[B]) \le \delta |B| k$ satisfies $e(G_{p_1}[B]) \ge |B|/2$.
    \end{tabular}
  \end{center}

  \noindent
  Suppose now that $\cA$ holds. In this case, there is a set $A \subseteq V(G)$ of at least $\gamma n$ vertices such that each connected component of $G_{p_1}[A]$ has at least $\eps k /4$ vertices and therefore,
  \[
  e(G_{p_1}[A]) \ge \left(1 - \frac{4}{\eps k}\right) |A| > \frac{|A|}{2}.
  \]
  Assume furthermore that $\cB$ does not hold. Then necessarily $e(G[A]) > \delta |A| k$. Observe that each edge of $G \setminus G_{p_1}$ belongs to $G_{p_2}$ with probability $p_2$. Hence, considering separately the cases $e(G_{p_1}[A]) \ge \frac{\delta |A| k}{2}$ and $e(G_{p_1}[A]) < \frac{\delta |A| k}{2}$, we obtain the following bound:
  \begin{equation}
    \label{eq:excess-A-notB}
    \begin{split}
      \Ex\big[e(G_p[A]) \mid \cA \wedge \neg \cB\big] & \ge \Ex\left[e(G_{p_1}[A]) + p_2 \cdot \big(e(G[A]) - e(G_{p_1}[A])\big) \mid \cA \wedge \neg \cB\right] \\
      & \ge \min \left\{ \frac{\delta|A|k}{2}, \left(1- \frac{4}{\eps k} \right)|A| + p_2 \cdot \frac{\delta|A|k}{2} \right\} \\
      & \ge \left(1 - \frac{4}{\eps k} + \frac{\eps\delta}{4}\right)|A| \ge \left(1+\frac{\eps\delta}{8}\right)|A| \ge |A| + \frac{\eps \delta \gamma}{8}n.
    \end{split}
  \end{equation}
  Consequently, since $\exc(G_p) \ge \exc(G_p[A])$ by~(\ref{eq:excessUleG}), the expected number of excess edges in $G_p$, conditioned on $\cA$ and $\neg \cB$ holding simultaneously, satisfies
  \[
  \Ex\big[\exc(G_p) \mid \cA \wedge \neg \cB\big] \ge \Ex \big[\exc(G_p[A]) \mid \cA \wedge \neg \cB \big] \ge \frac{\eps\delta\gamma}{8} n.
  \]
  It is therefore enough to prove the following claim.

  \begin{claims}
    $\Pr(\cA \wedge \neg \cB) \ge \gamma/2$.
  \end{claims}
  Let $L$ be the number of vertices of $G_{p_1}$ that lie in components of size at least $\eps k / 4$. It follows from Theorem~\ref{thm:pr-lrg-comp} that $\Ex[L] \ge 2\gamma n$ and hence, $\Pr(\cA) \ge \gamma$, as clearly $L \le n$. With the aim of estimating the probability of $\cB$, fix an arbitrary set $B$ of $b$ vertices, where $b \ge \gamma n$, such that $e(G[B]) \le \delta b k$. It follows from Lemma~\ref{lemma:lrg-dev} that
  \[
  \Pr\big(e(G_{p_1}[B]) \ge b/2\big) \le \Pr\big(\Bin(\delta b k, 2/k) \ge b/2\big) \le (4 e \delta)^{b/2}.
  \]
  Consequently,
  \[
  \Pr(\cB) \le \sum_{b \ge \gamma n} \binom{n}{b} (4 e \delta)^{b/2} \le \sum_{b \ge \gamma n} \left(\frac{en}{b}\right)^b (4 e \delta)^{b/2} \le \sum_{b \ge \gamma n} \left(\frac{4 e^3 \delta}{\gamma^2}\right)^{b/2} = \sum_{b \ge \gamma n} e^{-b/2} < \gamma / 2,
  \]
  since $\gamma n \ge \gamma k \ge 4/\gamma$. This completes the proof of the claim and consequently, the proof of the theorem.
\end{proof}

\begin{cor}
  \label{cor:excess}
  For every positive $\eps$, there exists a positive constant $c$ such that the following holds. Let $k$ be a sufficiently large integer and let $G$ be an $n$-vertex graph with minimum degree at least $k$. If $p \ge \frac{1+\eps}{k}$, then with probability at least $1 - 2 \exp( - c^2n / 12 )$, the number of excess edges in $G_p$ is at least $c n$.
\end{cor}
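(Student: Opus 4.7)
The plan is to deduce the corollary directly by combining the concentration bound of Proposition~\ref{prop:excess-conc} with the lower bound on the expectation from Theorem~\ref{thm:excess-edges}. Let $c_0$ be the constant supplied by Theorem~\ref{thm:excess-edges} for the given $\eps$, and assume without loss of generality that $c_0 \le 1$. Set $c := c_0/2$. Since Proposition~\ref{prop:excess-conc} requires $p \le 1/2$, I first reduce to this case: if $p > 1/2$, couple $G_{1/2}$ as a subgraph of $G_p$; by~\eqref{eq:excessUleG}, $\exc(G_{1/2}) \le \exc(G_p)$, and for $k$ large enough $1/2 \ge (1+\eps)/k$, so it suffices to prove the bound with $p$ replaced by $1/2$.

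Write $\mu := \Ex[\exc(G_p)]$. By Theorem~\ref{thm:excess-edges}, $\mu \ge c_0 n = 2cn$, so the event $\{\exc(G_p) < cn\}$ is contained in the event $\{|\exc(G_p) - \mu| \ge \mu - cn\}$. I plan to apply Proposition~\ref{prop:excess-conc} with deviation $t := \mu - cn$ and the parameter $\beta := t/(pe(G))$. Since $\exc(G_p) \le e(G_p)$ deterministically, $\mu \le pe(G)$ and hence $t \le pe(G)$, so $\beta \in (0,1]$. Proposition~\ref{prop:excess-conc} then gives
\[
\Pr\big(\exc(G_p) < cn\big) \le 2\exp\!\left(-\frac{t^2}{3\,pe(G)}\right),
\]
and it only remains to check that $t^2/(3pe(G)) \ge c^2 n/12$.

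The only mildly delicate point — and the single place where the argument is not completely automatic — is that $pe(G)$ has no a priori upper bound in terms of $n$, so concentration on the scale $cn$ is not immediately strong enough when $G$ is dense and $p$ is not tiny. The remedy is to split on the size of $pe(G)$. If $pe(G) \le 2n$, then since $t \ge \mu - cn \ge cn$, I get $t^2/(3pe(G)) \ge (cn)^2/(6n) = c^2n/6$. If instead $pe(G) > 2n$, I use the deterministic inequality $\exc(G_p) \ge e(G_p) - n$ to conclude $\mu \ge pe(G) - n > pe(G)/2$, whence $t \ge \mu/2 \ge pe(G)/4$; then $t^2/(3pe(G)) \ge pe(G)/48 \ge n/24 \ge c^2n/12$, using $c \le 1/2$. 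In either case the exponent is at least $c^2 n/12$, giving the required tail bound and completing the proof.
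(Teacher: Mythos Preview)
Your proof is correct and follows essentially the same approach as the paper: set $c$ to half the constant from Theorem~\ref{thm:excess-edges}, apply Proposition~\ref{prop:excess-conc}, and split according to the size of $p e(G)$. The only differences are cosmetic: in the dense case $p e(G) > 4n$ the paper bypasses Proposition~\ref{prop:excess-conc} and applies Chernoff's inequality directly to $e(G_p)$ (getting $e(G_p) \ge 2n$ and hence $\exc(G_p) \ge n$), whereas you stay with Proposition~\ref{prop:excess-conc} and exploit that $\mu \ge p e(G) - n$ is itself large; and the paper handles the constraint $p \le 1/2$ implicitly (since $p e(G) \le 4n$ together with $e(G) \ge nk/2$ forces $p \le 8/k$), while you do it explicitly via monotone coupling.
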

\begin{proof}
  Let $c = \min\{1, c_{\ref{thm:excess-edges}}(\eps)/2\}$ and suppose that $k$ is sufficiently large, so that the assertion of Theorem~\ref{thm:excess-edges} is true. Without loss of generality, we may assume that $pe(G) \le 4n$ as otherwise by Chernoff's inequality (Lemma~\ref{lemma:lrg-dev}), $e(G_p) \ge 2n$ with probability at least $1 - \exp(-n/4)$, and consequently
  \[
  \exc(G_p) \ge e(G_p) - n \ge n \ge c n.
  \]
  It follows from Theorem~\ref{thm:excess-edges} that $\Ex[\exc(G_p)] \ge 2cn$ and therefore by Proposition~\ref{prop:excess-conc} with $\beta = \frac{cn}{pe(G)}$,
  \[
  \Pr(\exc(G_p) < cn) \le 2 \exp\left(-\frac{cn}{3pe(G)} \cdot cn\right) \le 2 \exp\left( - \frac{c^2n}{12}\right),
  \]
  as $pe(G) \le 4n$ by our assumption.
\end{proof}

\subsection{Finding a long cycle}

Let $G$ be an $\HH$-free graph with minimum degree at least $k$ and assume that $p \ge \frac{1+\eps}{k}$. Let $c = c_{\ref{cor:excess}}/3$, let $\delta = c^2/21$, and suppose that $k$ is sufficiently large, so that, in particular, the assertion of Corollary~\ref{cor:excess} holds. Without loss of generality, we may assume that $\eps \le 1$. Let $\ell$ be the largest integer satisfying
\begin{equation}
  \label{eq:ell-cycle}
  \ex(\ell, \HH) \le \frac{ck\ell}{20}.
\end{equation}
By the maximality of $\ell$, we have $\ex(\ell+1, \HH) > ck(\ell+1)/20$ and hence $\ell \ge n_\HH(ck/10) \ge n_\HH(\delta k)$. Consider the $p$-random $\{0,1\}$-sequence $X$ of length $e(G)$. Recall the definition of $\len(\cdot, \cdot)$ from Section~\ref{sec:DFS}. We will show that if we run the DFS algorithm on the random subgraph of $G$ represented by $X$, then with probability at least $1 - 4\exp(-c^2k/16)$, the following event holds:

\noindent
\begin{center}
  \begin{tabular}{rp{0.85\textwidth}}
    $\cA$: & At the moment when the DFS algorithm finishes discovering all the connected components, i.e., when $S = V$, the number of edges $\{v,w\} \in G$ such that $\{v, w\}$ has not been yet queried and $\len(v,w) \ge \ell$ is at least $ck\ell/2$.
  \end{tabular}
\end{center}

\noindent
This will clearly be sufficient, since it implies that each of the last $ck\ell/2$ edges of $G$ that are queried by our graph exploration algorithm closes a cycle of length at least $\ell$ and  with probability at least $1 - (1-p)^{ck\ell/2}$, one of these queries will be answered positively. Since, as noted above, $\ell \ge n_\HH(ck/10) \ge ck/10$, this probability is at least $1 - \exp(-c^2k/20)$.

Let $n$ be the number of vertices of $G$. We now argue that the following statement implies the claimed bound on the probability that $G_p$ contains a cycle of length at least $\ell$.

\begin{claims}
  The event $\cA$ contains the intersection of the following two events:
\end{claims}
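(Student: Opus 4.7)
I would guess that the two events in the claim are as follows: $\mathcal{E}_1$ is the event $\exc(G_p) \ge cn$, which holds with probability at least $1 - 2\exp(-c^2 n/12)$ by Corollary~\ref{cor:excess}; and $\mathcal{E}_2$ is a uniform density event asserting a bound of the form $e(G_p[A]) \le |A| - 1$ (or a quantitatively close variant) for every subset $A \subseteq V(G)$ of size~$\ell$. By~\eqref{eq:ell-cycle} and Lemma~\ref{lemma:Turan-avg}, every $A \subseteq V(G)$ with $|A| \le \ell$ satisfies $\ex(|A|, \HH) \le (ck/10)|A|$, and so $\Ex[e(G_p[A])] \le p \cdot \ex(\ell, \HH) = O(c\ell)$; the event $\mathcal{E}_2$ should then hold with probability at least $1 - 2\exp(-\Omega(c^2 k))$ via Chernoff's inequality (Lemma~\ref{lemma:lrg-dev}(ii)) combined with a union bound over the $\binom{n}{\ell}$ subsets.

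To derive $\cA$ from $\mathcal{E}_1 \cap \mathcal{E}_2$, I plan to proceed in three steps. First, use $\mathcal{E}_1$: the DFS spanning forest $F$ of $G_p$ contains at least $\exc(G_p) \ge cn$ non-tree $G_p$-edges (``back-edges''), each of which is ancestor-descendant in $F$ and remains unqueried at the end of phase~1. Second, partition these back-edges according to whether $\len(v,w) < \ell$ (``short'') or $\len(v,w) \ge \ell$ (``long''); it then suffices to prove that the number of short back-edges is at most $cn - ck\ell/2$, as the remaining at least $ck\ell/2$ back-edges satisfy $\len \ge \ell$ and thus witness $\cA$. Third, apply a windowing argument to bound the short back-edges: each such edge has both endpoints in the window $W_v$ consisting of the descendant endpoint $v$ and its $\ell - 1$ nearest $F$-ancestors. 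Associating each short back-edge uniquely with the window anchored at its descendant endpoint, the count of short back-edges equals $\sum_v \bigl(\deg_{G_p}(v, W_v \setminus \{v\}) - \mathbf{1}[v \text{ has a parent in } F]\bigr)$, which under $\mathcal{E}_2$ is controlled by $\sum_v \bigl(e(G_p[W_v]) - (\ell - 1)\bigr)$, i.e., the per-window $G_p$-excess over tree edges.

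The main obstacle is twofold. First, $\mathcal{E}_2$ must be calibrated sharply: loose enough to hold with probability $1 - \exp(-\Omega(c^2 k))$ (matching the target $1 - 4\exp(-c^2 k/16)$ for $\Pr(\cA)$), yet tight enough that the count of short back-edges falls strictly below $cn - ck\ell/2$. Second, the windows $W_v$ are themselves random, depending on $F$ and hence on $G_p$; consequently $\mathcal{E}_2$ must be stated deterministically and uniformly over all $\ell$-subsets of $V(G)$, and taming the entropy factor $\binom{n}{\ell}$ in the union bound while preserving the exponential savings from Chernoff is the most delicate quantitative step.
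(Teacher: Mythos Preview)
Your guess for the two events is off, and the approach you sketch has a genuine gap. The paper's events are: (i) $\exc(G_p) \ge 3cn$, and (ii) fewer than $3cn$ of the last $ckn$ entries of the query sequence $(X_i)$ equal $1$. Their combined role is \emph{only} to force $e(G') \ge ckn$, where $G' \subseteq G$ is the set of edges still unqueried when phase~1 ends: every positive phase-1 query produces a tree edge, so all $\exc(G_p) \ge 3cn$ non-tree $G_p$-edges are positive answers in phase~2; if phase~2 comprised at most $ckn$ queries, event~(ii) would be violated. From $e(G') \ge ckn$ the rest is \emph{deterministic}: since $G'$ is $\HH$-free, one locates a subtree $R^*$ of the DFS forest with $\ell \le |V(R^*)| < 2\ell$ receiving at least $ck\ell$ edges of $G'$ from ancestors, extends it by $\ell$ further ancestors to $R^+$, and uses $e(G'[V(R^+)]) \le \ex(3\ell,\HH) \le ck\ell/2$ to conclude that at least $ck\ell/2$ of those $G'$-edges have $\len \ge \ell$.

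Your plan instead tries to exhibit $ck\ell/2$ long back-edges \emph{of $G_p$}, which is strictly stronger than $\cA$ (the event concerns unqueried edges of $G$, not of $G_p$), and your proposed $\mathcal{E}_2$ cannot be established. The number of vertices $n$ is not controlled by $k$ or $\ell$ --- for instance $G$ may be a disjoint union of arbitrarily many copies of $K_{k+1}$ --- so the entropy factor $\binom{n}{\ell} \ge (n/\ell)^\ell$ swamps any Chernoff saving of the form $e^{-\Theta(\ell)}$; the union bound over all $\ell$-subsets simply diverges. Moreover, the sharp form $e(G_p[A]) \le |A|-1$ would rule out \emph{any} short back-edge along an $F$-path, which is clearly too much to ask. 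The paper avoids all of this by never needing probabilistic control on small subsets: once $e(G') \ge ckn$ is secured, the long edges are found using only the Tur\'an-type bound~\eqref{eq:ell-cycle} on the deterministic $\HH$-free graph $G'$.
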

\begin{enumerate}[(i)]
\item
  \label{item:cA-1}
  The number of excess edges in $G_p$ is at least $3cn$.
\item
  \label{item:cA-2}
  There are fewer than $3cn$ indices $i$ with $i > e(G) - ckn$ such that $X_i = 1$.
\end{enumerate}

Indeed, note that $n \ge k+1$, that, by Corollary~\ref{cor:excess}, (\ref{item:cA-1}) holds with probability at least $1 - 2\exp(-c^2n/2)$, and that, by Chernoff's inequality (Lemma~\ref{lemma:lrg-dev}), (\ref{item:cA-2}) holds with probability at least $1 - 2\exp(-cn/16)$.

It now suffices to prove the claim. Let $G' \subseteq G$ consist of pairs (edges of $G$) that have not been queried at the time when our graph exploration algorithm finishes discovering the connected components of $G_p$. Note that (\ref{item:cA-1}) and (\ref{item:cA-2}) imply that $e(G') \ge ckn$ as the rooted spanning forest $F$ of $G_p$ constructed by the DFS algorithm contains no excess edges. Recall from Section~\ref{sec:DFS} that each edge of $G'$ connects a vertex $w$ with its predecessor $v$ in one of the rooted trees forming $F$ and that $\len(v,w)$ denotes the length of the unique path joining $v$ and $w$ in this tree. Since the average degree of $G'$ is at least $2ck$ and the endpoints of each edge of $G'$ lie in the same connected component of $F$, there must be a rooted tree $R$ in $F$ such that the average degree of $G'[V(R)]$ is also at least $2ck$. We will now show that there are at least $ck\ell/2$ edges $\{v,w\} \in G'[V(R)]$ such that $\len(v,w) \ge \ell$.

To this end, for every subtree $R^*$ of $R$, let
\[
d(R^*) = \sum_{w \in V(R^*)} |\{v \in V(R) \colon \text{$v$ is a predecessor of $w$ in $R$ and } \{v,w\} \in G'\}|
\]
and note that $d(R) = e(G'[V(R)]) \ge ck|V(R)|$. We claim that $R$ contains a subtree $R^*$ with $\ell \le |V(R^*)| < 2\ell$ with $d(R^*) \ge ck\ell$. Indeed, we can construct such $R^*$ as follows. First, repeatedly delete from $R$ every full subtree\footnote{A subtree of a rooted tree that is induced by some vertex and all of its descendants.} $R'$ such that $d(R') < ck|V(R')|$ until there are no such $R'$ left. Since as a result of every such deletion, the ratio $d(R)/|V(R)|$ increases, we may assume that $R$ contains no such $R'$. We claim that $R$ still has at least $\ell$ vertices. Indeed, $G'[V(R)]$ is an $\HH$-free graph with average degree at least $2ck$ and for every $m \le \ell$, by~\eqref{eq:ell-cycle} and Lemma~\ref{lemma:Turan-avg},
\[
\frac{ex(m,\HH)}{m} \le 2 \cdot \frac{\ex(\ell,\HH)}{\ell} \le \frac{ck}{10}.
\]
We can obtain an $R^*$ with the claimed property by running the following simple recursive procedure on $R$: If $R$ has fewer than $2\ell$ vertices, then we let $R^* = R$. Otherwise, if the full subtree $R'$ rooted at one of the children of the root of $R$ has at least $\ell$ vertices, then we let $R = R'$ and work with $R'$, noting that $d(R') \ge ck|V(R')|$ by our assumption on the original tree. Else, $R$ has at least $2\ell$ vertices but each full subtree of $R$ attached to its root has fewer than $\ell$ vertices. In this case, we can easily obtain a subtree $R^*$ of $R$ that has the required properties by deleting some of these full subtrees.

Finally, let $R^+$ be the subtree of $R$ obtained from $R^*$ by adding to it the first (at most) $\ell$ vertices on the unique path from the root of $R^*$ to the root of $R$. By definition, if $\{v,w\}$ is an edge of $G'$ such that $\len(v,w) < \ell$ and $v$ is a predecessor of $w$, then $w \in V(R^*)$ implies that $v \in V(R^+)$. It follows that
\[
|\{\{v, w\} \in G' \colon \len(v,w) \ge \ell\}| \ge d(R^*) - e(G'[V(R^+)]) \ge ck\ell - \ex(3\ell, \HH) \ge ck\ell/2.
\]
To see the last inequality, note that by~\eqref{eq:ell-cycle} and Lemma~\ref{lemma:Turan},
\[
\ex(3\ell, \HH) \le 10 \cdot \ex(\ell, \HH) \le ck\ell/2.
\]
This completes the proof of the claim and consequently, the proof of the theorem. \qed

\section{Concluding remarks}

\label{sec:remarks}

\subsection{Dependence of $\ell$ on $\HH$ and $\eps$ and the function $n_\HH(\cdot)$}
\label{sec:dep-ell-eps}

In our proof of Theorem~\ref{thm:cycle}, we made no attempts to optimize the dependence of $\delta$ on $\eps$ and $\HH$. A careful analysis of the proof shows that $\delta$ has only polynomial dependence on $\eps$, that is, Theorem~\ref{thm:cycle} asserts the existence of a cycle of length $n_\HH(c_\HH \eps^{\alpha_\HH})$, where $c_\HH$ and $\alpha_\HH$ are positive constants depending only on $\HH$. It would be interesting to know whether in the setting of Corollary~\ref{cor:cycle}, i.e., when $\HH$ is the empty family, one can replace $\alpha_\HH$ with $2$, as in Theorem~\ref{thm:path}, where condition~\eqref{eq:ell-path} is satisfied for $\ell = \eps^2k/36$. 

A much more intriguing question is that about the relation between $n_\HH(k)$ and $n_\HH(\delta k)$. It is not difficult to prove that for every good family $\HH$ and positive constant $\delta$, we have $n_\HH(\delta k) \le (\delta + o(1)) n_\HH(k)$. Indeed, one may derive this estimate from Lemma~\ref{lemma:Turan} with $m = n_\HH(\delta k) - 1$ and $n = n_\HH(k)$. On the other hand, for most bipartite graphs $H$, since the known lower and upper bounds on $\ex(n,H)$ do not match, it is not even clear whether the ratio $n_{\{H\}}(k) / n_{\{H\}}(\delta k)$ is bounded for any constant $\delta < 1$.

\subsection{Regular graphs}
\label{sec:regular-G}

The proof of Theorem~\ref{thm:cycle} can be somewhat simplified (at least conceptually) when one makes the assumption that the graph $G$ is regular. Recall that the key step in our proof is establishing that with high probability, the graph $G_p$ contains $\Omega(n)$ excess edges. If $G$ is regular, then using the approach of~\cite{FrKrMa04}, one can obtain a lower bound on the expected number of small tree components in $G_p$ that is sufficiently strong to imply, via~\eqref{eq:excess-comp}, that $\Ex[\exc(G_p)] \ge \eps^3n/12$. Proposition~\ref{prop:excess-conc} will then imply the statement of Corollary~\ref{cor:excess} with $c$ replaced by $\eps^2/20$, which is sufficient for the proof of Theorem~\ref{thm:cycle}.

Moreover, following the approach of~\cite{FrKrMa04}, one may also show that for some absolute constant $C$, with high probability only at most $C \eps n$ vertices of $G_p$ lie in connected components that are not small trees. Consequently, in the proof of Theorem~\ref{thm:cycle}, we may find a rooted tree $R$ such that the average degree of $G'[V(R)]$ is at least $\frac{\eps^2}{10C} \cdot k$. This is enough to show that with high probability, $G_p$ contains a cycle of length at least $\frac{\eps^2}{20C} \cdot k$.

\subsection{Graphs with large average degree}
\label{sec:large-avg-deg}

It is natural to ask whether Theorem~\ref{thm:cycle} is still true
when one replaces the assumption that the minimum degree of $G$ is
at least $k$ with the much weaker assumption that the {\em average}
degree of $G$ is at least $k$. Since each graph $G$ with average
degree $k$ contains a subgraph with minimum degree exceeding $k/2$,
one can easily deduce such statement if one strengthens the
assumption on $p$ to $p \ge \frac{2+\eps}{k}$. The following easy
argument shows that this is not optimal.

Assume that $G$ has $n$ vertices and average degree $k$. For every
$p \in [0,1]$, since the function $(0,\infty) \ni x \mapsto (1-p)^x$
is convex, the expected number of isolated vertices in $G_p$ is at
least $(1-p)^k \cdot n$. Let $c_0$ be the positive solution of the
equation $\frac{c}{2} - 1 + e^{-c} = 0$ and observe that $c_0
\approx 1.6$. Using~\eqref{eq:excess-comp}, we see that if $c > c_0$
and $k$ is sufficiently large, then $\Ex[\exc(G_p)] = \Omega(n)$. It
follows that the assumption on $p$ can be weakened to $p \ge
\frac{c_0 + \eps}{k}$. We believe that similarly as in the minimum
degree case, the assumption $p \ge \frac{1+\eps}{k}$ is sufficient,
but at the moment we are unable to establish this claim.

\medskip

\noindent
\textbf{Acknowledgment.} The authors would like to thank Ron Peled for helpful discussions and the anonymous referee for their invaluable feedback.

\bibliographystyle{amsplain}
\bibliography{min-deg-H-free}

\end{document}